\newtheorem{theorem}{Theorem}
\theoremstyle{plain}
\newtheorem{definition}{Definition}
\newtheorem{lemma}{Lemma}
\newtheorem{remark}{Remark}
\numberwithin{equation}{section}
\begin{document}
\title[rough multilinear fractional integral{\ operators} ]{On the behaviors
of rough multilinear fractional integral{\ }and multi-sublinear fractional
maximal{\ operators both on product }$L^{p}$ and {\ weighted }$L^{p}$ spaces}
\author{FER\.{I}T G\"{U}RB\"{U}Z}
\address{HAKKARI UNIVERSITY, FACULTY OF EDUCATION, DEPARTMENT OF MATHEMATICS
EDUCATION, HAKKARI, TURKEY }
\email{feritgurbuz84@hotmail.com}
\email{feritgurbuz@hakkari.edu.tr}
\urladdr{}
\thanks{}
\curraddr{ }
\urladdr{}
\thanks{}
\date{}
\subjclass[2000]{ 42B20, 42B25, 42B35}
\keywords{{multilinear fractional integral\ operator; multi-sublinear
fractional maximal operator; rough kernel; }$A_{p,q}^{\alpha }$ weight}
\dedicatory{}
\thanks{}

\begin{abstract}
The aim of this paper is to get the product $L^{p}$-estimates, weighted
estimates and two-weighted estimates for rough multilinear fractional
integral{\ operators }and rough multi-sublinear fractional maximal{\
operators}, respectively. The author also studies two-weighted weak type
estimate on product $L^{p}\left( {\mathbb{R}^{n}}\right) $ for rough
multi-sublinear fractional maximal{\ operators. }In fact, this article is
the rough kernel versions of \cite{Kenig, Shi}'s results.
\end{abstract}

\maketitle

\section{Introduction}

Let ${\mathbb{R}^{n}}$ be the $n$-dimensional Euclidean space of points $%
x=(x_{1},...,x_{n})$ with norm $|x|=\left( \dsum
\limits_{i=1}^{n}x_{i}^{2}\right) ^{\frac{1}{2}}$ and $\left( {\mathbb{R}^{n}%
}\right) ^{m}={\mathbb{R}^{n}\times \ldots \times \mathbb{R}^{n}}$ be the $m$%
-fold product spaces $\left( m\in 
\mathbb{N}
\right) $. Throughout this paper, we denote by $\overrightarrow{y}=\left(
y_{1},\ldots ,y_{m}\right) $ and $x,y_{1},\ldots ,y_{m}\in $ ${\mathbb{R}^{n}%
}$, $d\overrightarrow{y}=dy_{1}\ldots dy_{m}$, and by $\overrightarrow{f}$
the $m$-tuple $\left( f_{1},...,f_{m}\right) $, $m$, $n$ the nonnegative
integers with $n\geq 2$, $m\geq 1$. Let also $S^{mn-1}$ denote the unit
sphere of ${\mathbb{R}^{mn}}$ only with the condition $\Omega \in
L^{s}\left( S^{mn-1}\right) $ for some $s>1$.

It is well known that, for the purpose of researching non-smoothness partial
differential equation, mathematicians pay more attention to the singular
integrals. Moreover, the classical fractional integral operator(Riesz
potential) $I_{\alpha }$ plays important roles in many fields of
mathematics. For example, its most significant property is that $I_{\alpha }$
maps $L^{p}\left( {\mathbb{R}^{n}}\right) $ continuously into $L^{q}\left( {%
\mathbb{R}^{n}}\right) $, with $\frac{1}{q}=\frac{1}{p}-\frac{\alpha }{n}$
and $1<p<\frac{n}{\alpha }$, through the well known Hardy-Littlewood-Sobolev
imbedding theorem (see \cite{St}). On the other hand, the theory of
multilinear analysis was received extensive studies in the last 3 decades.
Among numerous references, in the following we list a few of them about
multilinear maximal function and multilinear fractional integral which are
related to the study in this article.

In 1992, Grafakos \cite{Grafakos1} first studied multilinear maximal
function and multilinear fractional integral defined by 
\begin{equation*}
M_{\alpha ,\overrightarrow{\theta }}^{\left( m\right) }\left( 
\overrightarrow{f}\right) \left( x\right) =\sup_{t>0}\frac{1}{r^{n-\alpha }}%
\int \limits_{\left \vert y\right \vert <r}\left \vert \dprod
\limits_{i=1}^{m}f_{i}\left( x-\theta _{i}y\right) \right \vert dy
\end{equation*}%
and 
\begin{equation*}
I_{\alpha ,\overrightarrow{\theta }}^{\left( m\right) }\left( 
\overrightarrow{f}\right) \left( x\right) =\int \limits_{{\mathbb{R}^{n}}}%
\frac{1}{\left \vert y\right \vert ^{n-\alpha }}\dprod
\limits_{i=1}^{m}f_{i}\left( x-\theta _{i}y\right) dy,
\end{equation*}%
where $\overrightarrow{f}=\left( f_{1},...,f_{m}\right) $, $\overrightarrow{%
\theta }=\left( \theta _{1},...,\theta _{m}\right) $ is a fixed vector with
distinct nonzero real numbers and $0<\alpha <n$. We note that, if we simply
take $m=1$ and $\theta _{i}=1$, then $M_{\alpha }$ and $I_{\alpha }$ are
just the operators studied by Muckenhoupt and Wheeden in \cite{Muckenhoupt}.

In 1999, another multilinear fractional integral was defined by Kenig and
Stein \cite{Kenig} as follows:

\begin{equation*}
I_{\alpha }^{\left( m\right) }\left( \overrightarrow{f}\right) \left(
x\right) =\dint \limits_{\left( {\mathbb{R}^{n}}\right) ^{m}}\frac{1}{\left
\vert \overrightarrow{y}\right \vert ^{mn-\alpha }}\dprod%
\limits_{i=1}^{m}f_{i}\left( x-y_{i}\right) d\overrightarrow{y},
\end{equation*}%
where $\left \vert \overrightarrow{y}\right \vert =\left \vert y_{1}\right
\vert +\cdots +\left \vert y_{m}\right \vert $. They proved that $I_{\alpha
}^{\left( m\right) }$ is of strong type $\left( L^{p_{1}}\times
L^{p_{2}}\times \cdots \times L^{p_{m}},L^{q}\right) $ and weak type $\left(
L^{p_{1}}\times L^{p_{2}}\times \cdots \times L^{p_{m}},L^{q,\infty }\right) 
$. And also corresponding multi-sublinear fractional maximal{\ operator is
defined by}%
\begin{equation*}
M_{\alpha }^{\left( m\right) }\left( \overrightarrow{f}\right) \left(
x\right) =\sup_{r>0}\frac{1}{r^{mn-\alpha }}\int \limits_{\left \vert 
\overrightarrow{y}\right \vert <r}\dprod \limits_{i=1}^{m}\left \vert
f_{i}\left( x-y_{i}\right) \right \vert d\overrightarrow{y},
\end{equation*}%
where $0<\alpha <mn$ and $x\in $ ${\mathbb{R}^{n}}$.

In 2008, Shi and Tao built the one-weighted and two-weighted boundedness on
product $L^{p}\left( {\mathbb{R}^{n}}\right) $ space for $I_{\alpha
}^{\left( m\right) }$ and these authors also considered two-weighted weak
type estimate on product $L^{p}\left( {\mathbb{R}^{n}}\right) $ for $%
M_{\alpha }^{\left( m\right) }$.

Motivated by \cite{Kenig, Shi}, in this paper we will introduce the
following rough multilinear fractional operators $I_{\Omega ,\alpha
}^{\left( m\right) }$ and rough multi-sublinear fractional maximal operators 
$M_{\Omega ,\alpha }^{\left( m\right) }$, which are the more generalizations
of the classical setting and study them on product spaces $L^{p_{1}}\left( {%
\mathbb{R}^{n}}\right) \times L^{p_{2}}\left( {\mathbb{R}^{n}}\right) \times
\cdots \times L^{p_{m}}\left( {\mathbb{R}^{n}}\right) $%
\begin{equation*}
I_{\Omega ,\alpha }^{\left( m\right) }\left( \overrightarrow{f}\right)
\left( x\right) =\dint \limits_{\left( {\mathbb{R}^{n}}\right) ^{m}}\frac{%
\Omega \left( \overrightarrow{y}\right) }{\left \vert \overrightarrow{y}%
\right \vert ^{mn-\alpha }}\dprod \limits_{i=1}^{m}f_{i}\left(
x-y_{i}\right) d\overrightarrow{y},
\end{equation*}%
\begin{equation*}
M_{\Omega ,\alpha }^{\left( m\right) }\left( \overrightarrow{f}\right)
\left( x\right) =\sup_{r>0}\frac{1}{r^{mn-\alpha }}\dint \limits_{\left
\vert \overrightarrow{y}\right \vert <r}\left \vert \Omega \left( 
\overrightarrow{y}\right) \right \vert \dprod \limits_{i=1}^{m}\left \vert
f_{i}\left( x-y_{i}\right) \right \vert d\overrightarrow{y},
\end{equation*}%
where $\left \vert \overrightarrow{y}\right \vert =\left \vert
y_{1}\right
\vert +\cdots +\left \vert y_{m}\right \vert $.

At last, it will be an interesting question whether Kenig and Stein's famous
result (Theorem 1 in \cite{Kenig}) and Shi and Tao's weighted conclusions
can be extended to the operators $I_{\Omega ,\alpha }^{\left( m\right) }$
and $M_{\Omega ,\alpha }^{\left( m\right) }$ for $m>1$ and non-smooth kernel 
$\Omega $.

In this paper, we will give the positive answers to this question, and
simultaneity extend Kenig and Stein's famous result (Theorem 1 in \cite%
{Kenig}) and Shi and Tao's weighted conclusions to the context for{\ }the
operators $I_{\Omega ,\alpha }^{\left( m\right) }$ and{\ }$M_{\Omega ,\alpha
}^{\left( m\right) }$, and also show their weighted boundedness,
respectively.

Throughout this paper, the letter $C$ will denote a constant whose value may
vary at each occurrence, but it is independent of the main parameters.

\section{definitions and main results}

We first recall the definition of weighted Lebesgue spaces. By a "weight" we
will mean a non-negative function $w$ that is a positive measure a.e. on ${%
\mathbb{R}^{n}}$.

\begin{definition}
$\left( \text{\textbf{Weighted Lebesgue space}}\right) $ Let $1\leq p\leq
\infty $ and given a weight $w\left( x\right) \in A_{p}\left( {{\mathbb{R}%
^{n}}}\right) $, we shall define weighted Lebesgue spaces as 
\begin{eqnarray*}
L_{p}(w) &\equiv &L_{p}({{\mathbb{R}^{n}}},w)=\left \{ f:\Vert f\Vert
_{L_{p,w}}=\left( \dint \limits_{{{\mathbb{R}^{n}}}}|f(x)|^{p}w(x)dx\right)
^{\frac{1}{p}}<\infty \right \} ,\qquad 1\leq p<\infty . \\
L_{\infty ,w} &\equiv &L_{\infty }({{\mathbb{R}^{n}}},w)=\left \{ f:\Vert
f\Vert _{L_{\infty ,w}}=\limfunc{esssup}\limits_{x\in {\mathbb{R}^{n}}%
}|f(x)|w(x)<\infty \right \} .
\end{eqnarray*}
\end{definition}

Before showing our main results, we next recall the definitions of some
relative weight classes. In the following definitions, the function $w$ and
the function pair $\left( u,\upsilon \right) $ are all locally integrable
nonnegative functions. Moreover, $C>0$ and $Q$ denotes a cube in ${{\mathbb{R%
}^{n}}}$ with its sides parallel to the coordinate axes.

\begin{definition}
$\left( \text{\textbf{Class of }}A_{p}\right) $ A function $w$ is said to
belong to $A_{p}\left( 1<p<\infty \right) $ if 
\begin{equation}
\sup \limits_{Q\subset {{\mathbb{R}^{n}}}}\left( \frac{1}{|Q|}\dint
\limits_{Q}w(x)dx\right) \left( \frac{1}{|Q|}\dint
\limits_{Q}w(x)^{1-p^{\prime }}dx\right) ^{p-1}\leq C,  \label{100}
\end{equation}%
where $p^{\prime }=\frac{p}{p-1}$. The condition (\ref{100}) is called the $%
A_{p}$-condition, and the weights which satisfy it are called $A_{p}$%
-weights. The property of the $A_{p}$-weights implies that generally
speaking, we should check whether a weight $w$ satisfies an $A_{p}$%
-condition or not.
\end{definition}

\begin{definition}
\cite{Muckenhoupt}$\left( \text{\textbf{Class of} }A_{p,q}\right) $ A
function $w$ is said to belong to the Muckenhoupt-Wheeden class $A\left(
p,q\right) \left( 1<p<q<\infty \right) $ if%
\begin{equation*}
\sup \limits_{Q\subset {{\mathbb{R}^{n}}}}\left( \frac{1}{|Q|}\dint
\limits_{Q}w(x)^{q}dx\right) ^{\frac{1}{q}}\left( \frac{1}{|Q|}\dint
\limits_{Q}w(x)^{-p^{\prime }}dx\right) ^{\frac{1}{p^{\prime }}}\leq C.
\end{equation*}
\end{definition}

\begin{definition}
\cite{Shi}$\left( \text{\textbf{Class of} }A_{p,q}^{\alpha }\right) $ A
function pair $\left( u,\upsilon \right) $ is said to belong to the radial
Muckenhoupt-Wheeden class $A_{p,q}^{\alpha }$ $\left( 1\leq p\leq q<\infty 
\text{ and }0\leq \alpha <n\right) $ if%
\begin{equation*}
\left \vert Q\right \vert ^{\frac{1}{q}+\frac{\alpha }{n}-\frac{1}{p}}\left( 
\frac{1}{\left \vert Q\right \vert }\dint \limits_{Q}u\left( x\right)
dx\right) ^{\frac{1}{q}}\left( \frac{1}{\left \vert Q\right \vert }\dint
\limits_{Q}\upsilon \left( x\right) ^{\left( 1-p^{\prime }\right) }dx\right)
^{\frac{1}{p^{\prime }}}\leq C,
\end{equation*}%
when $1<p<\infty $; and%
\begin{equation*}
\left \vert Q\right \vert ^{\frac{1}{q}+\frac{\alpha }{n}-1}\left( \frac{1}{%
\left \vert Q\right \vert }\dint \limits_{Q}u\left( x\right) dx\right) ^{%
\frac{1}{q}}\leq C\upsilon \left( x\right) \qquad \text{a.e. }x\in Q,
\end{equation*}%
when $p=1$. Recall the definition of $A_{p}$ weight, it is easy to see that $%
\left( u,\upsilon \right) \in A_{p}$ if and only if $\left( u,\upsilon
\right) \in A_{p,p}^{0}$ for $1\leq p<\infty $.
\end{definition}

In this paper, we prove the following results.

\begin{theorem}
\label{teo1}Let $0<\alpha <mn$, $1\leq s^{\prime }<\frac{mn}{\alpha }$, and $%
\Omega $ be homogeneous of degree zero on ${\mathbb{R}^{mn}}$ with $\Omega
\in L^{s}\left( S^{mn-1}\right) \left( s>1\right) $, $\frac{1}{s}+\frac{1}{%
s^{\prime }}=1$. Let $\frac{1}{p}=\frac{1}{p_{1}}+\frac{1}{p_{2}}+\cdots +%
\frac{1}{p_{m}}-\frac{\alpha }{n}>0$.

$\left( i\right) $ If each $s^{\prime }<p_{i}$, then there exists a constant 
$C>0$ such that%
\begin{equation}
\left \Vert M_{\Omega ,\alpha }^{\left( m\right) }\left( \overrightarrow{f}%
\right) \right \Vert _{L^{p}\left( {\mathbb{R}^{n}}\right) }\leq C\dprod
\limits_{i=1}^{m}\left \Vert f_{i}\right \Vert _{L^{p_{i}}\left( {\mathbb{R}%
^{n}}\right) }.  \label{1}
\end{equation}

$\left( ii\right) $ If $p_{i}=s^{\prime }$ for some $i$, then there exists a
constant $C>0$ such that%
\begin{equation}
\left \Vert M_{\Omega ,\alpha }^{\left( m\right) }\left( \overrightarrow{f}%
\right) \right \Vert _{L^{p,\infty }\left( {\mathbb{R}^{n}}\right) }\leq
C\dprod \limits_{i=1}^{m}\left \Vert f_{i}\right \Vert _{L^{p_{i}}\left( {%
\mathbb{R}^{n}}\right) }.  \label{2}
\end{equation}
\end{theorem}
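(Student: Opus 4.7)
The plan is to dominate $M_{\Omega,\alpha}^{(m)}(\overrightarrow{f})$ pointwise by a fractional power of the classical (smooth-kernel) multi-sublinear fractional maximal operator $M_{\alpha s'}^{(m)}$, and then apply the known Grafakos/Kenig--Stein $L^{p}$-boundedness of the latter to recover (\ref{1}) and (\ref{2}).

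First I would fix $x\in\mathbb{R}^{n}$ and $r>0$ and apply H\"older's inequality with exponents $s,s'$ in the defining integral of $M_{\Omega,\alpha}^{(m)}$, separating $|\Omega(\overrightarrow{y})|$ from $\prod_{i=1}^{m}|f_{i}(x-y_{i})|$. Since $\Omega$ is homogeneous of degree zero on $\mathbb{R}^{mn}$ and the ``diamond'' $\{|\overrightarrow{y}|<r\}$ is contained in the Euclidean ball of radius $r$ in $\mathbb{R}^{mn}$, Euclidean polar coordinates give
\begin{equation*}
\Bigl(\int_{|\overrightarrow{y}|<r}|\Omega(\overrightarrow{y})|^{s}\,d\overrightarrow{y}\Bigr)^{1/s}\le C\,r^{mn/s}\,\|\Omega\|_{L^{s}(S^{mn-1})}.
\end{equation*}
Combining this with the H\"older bound and the algebraic identity $mn-\alpha-mn/s=(mn-\alpha s')/s'$, I expect to derive the pointwise domination
\begin{equation*}
M_{\Omega,\alpha}^{(m)}(\overrightarrow{f})(x)\le C\,\|\Omega\|_{L^{s}(S^{mn-1})}\,\bigl(M_{\alpha s'}^{(m)}(|f_{1}|^{s'},\dots,|f_{m}|^{s'})(x)\bigr)^{1/s'},
\end{equation*}
where the hypothesis $s'<mn/\alpha$ is precisely what ensures $\alpha s'\in(0,mn)$ so that the right-hand operator is well-defined.

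The second step is to transfer the existing multi-sublinear fractional maximal theory along this inequality. Writing $F_{i}=|f_{i}|^{s'}$, $q_{i}=p_{i}/s'$ and $q=p/s'$, one verifies $\tfrac{1}{q}=\sum_{i=1}^{m}\tfrac{1}{q_{i}}-\tfrac{\alpha s'}{n}>0$, so by Grafakos \cite{Grafakos1} and Kenig--Stein \cite{Kenig}, $M_{\alpha s'}^{(m)}$ is strong type $(L^{q_{1}}\times\cdots\times L^{q_{m}},L^{q})$ when all $q_{i}>1$ (equivalently, every $p_{i}>s'$, which is case~$(i)$) and weak type when some $q_{i}=1$ (equivalently, $p_{i}=s'$ for some $i$, which is case~$(ii)$). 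Taking the $L^{p}$-norm (respectively the $L^{p,\infty}$-norm) of the pointwise inequality and using the elementary identities $\|g^{1/s'}\|_{L^{p}}=\|g\|_{L^{p/s'}}^{1/s'}$ and $\|g^{1/s'}\|_{L^{p,\infty}}=\|g\|_{L^{p/s',\infty}}^{1/s'}$ together with $\|F_{i}\|_{L^{q_{i}}}=\|f_{i}\|_{L^{p_{i}}}^{s'}$ will then yield (\ref{1}) and (\ref{2}).

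The main technical obstacle I anticipate is the pointwise reduction of Step~1, in particular the bookkeeping between the $\ell^{1}$-type balls $\{|\overrightarrow{y}|<r\}$ appearing in the operator and the Euclidean polar decomposition that converts the $\Omega$-factor into its $L^{s}(S^{mn-1})$-norm. Once this pointwise inequality is in place, both (\ref{1}) and (\ref{2}) reduce to a routine substitution in the classical multi-sublinear fractional maximal bounds.
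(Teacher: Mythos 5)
Your proposal is correct and follows essentially the same route as the paper: the pointwise domination $M_{\Omega,\alpha}^{(m)}(\overrightarrow{f})(x)\le C\bigl(M_{\alpha s'}^{(m)}(|f_{1}|^{s'},\dots,|f_{m}|^{s'})(x)\bigr)^{1/s'}$ via H\"older with exponents $s,s'$ is exactly the paper's Lemma \ref{lemma1*}, and the transfer to (\ref{1}) and (\ref{2}) through the strong/weak type bounds for the smooth operator $M_{\alpha s'}^{(m)}$ (which the paper derives in Theorem \ref{teo3*} by dominating $M_{\alpha}^{(m)}$ by $I_{\alpha}^{(m)}$ and citing Kenig--Stein) matches the paper's argument, including the distribution-function computation in the weak-type case.
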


\begin{theorem}
\label{teo2}Suppose the same conditions and notations of that in Theorem \ref%
{teo1},

$\left( i\right) $ if each $s^{\prime }<p_{i}$, then there exists a constant 
$C>0$ such that%
\begin{equation}
\left \Vert I_{\Omega ,\alpha }^{\left( m\right) }\left( \overrightarrow{f}%
\right) \right \Vert _{L^{p}\left( {\mathbb{R}^{n}}\right) }\leq C\dprod
\limits_{i=1}^{m}\left \Vert f_{i}\right \Vert _{L^{p_{i}}\left( {\mathbb{R}%
^{n}}\right) };  \label{3}
\end{equation}

$\left( ii\right) $ if $p_{i}=s^{\prime }$ for some $i$, then there exists a
constant $C>0$ such that%
\begin{equation}
\left \Vert I_{\Omega ,\alpha }^{\left( m\right) }\left( \overrightarrow{f}%
\right) \right \Vert _{L^{p,\infty }\left( {\mathbb{R}^{n}}\right) }\leq
C\dprod \limits_{i=1}^{m}\left \Vert f_{i}\right \Vert _{L^{p_{i}}\left( {%
\mathbb{R}^{n}}\right) }.  \label{4}
\end{equation}
\end{theorem}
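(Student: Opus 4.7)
The plan is to reduce Theorem~\ref{teo2} to the rough multi-sublinear maximal inequality (the $\alpha=0$ analogue of Theorem~\ref{teo1}) via a Hedberg-type pointwise control. Introducing the auxiliary operator
\[
M_{\Omega}^{(m)}(\overrightarrow{f})(x):=\sup_{r>0}\frac{1}{r^{mn}}\int_{|\overrightarrow{y}|<r}|\Omega(\overrightarrow{y})|\prod_{i=1}^{m}|f_{i}(x-y_{i})|\,d\overrightarrow{y},
\]
the key pointwise bound I aim for is
\[
\bigl|I_{\Omega,\alpha}^{(m)}(\overrightarrow{f})(x)\bigr|\;\le\;C\Bigl(R^{\alpha}M_{\Omega}^{(m)}(\overrightarrow{f})(x)+R^{-n/p}\prod_{i=1}^{m}\|f_{i}\|_{L^{p_{i}}(\mathbb{R}^{n})}\Bigr),\qquad R>0.
\]

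To prove this bound I would partition the defining integral of $I_{\Omega,\alpha}^{(m)}(\overrightarrow{f})(x)$ dyadically over the annuli $E_{k}=\{2^{k}\le|\overrightarrow{y}|<2^{k+1}\}$ and split the resulting sum at the scale $R$. On $E_{k}$ with $2^{k}<R$, use $|\overrightarrow{y}|^{-(mn-\alpha)}\le 2^{-k(mn-\alpha)}$ and enlarge the domain to $\{|\overrightarrow{y}|<2^{k+1}\}$, obtaining $C\,2^{k\alpha}M_{\Omega}^{(m)}(\overrightarrow{f})(x)$ per piece; summing the resulting geometric series produces the near-part term $C\,R^{\alpha}M_{\Omega}^{(m)}(\overrightarrow{f})(x)$. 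On $E_{k}$ with $2^{k}\ge R$, apply H\"older first in $\overrightarrow{y}$ with exponents $(s,s')$: the $\Omega$-factor gives, via polar coordinates on $\mathbb{R}^{mn}$ and the homogeneity of degree zero, at most $C\,2^{kmn/s}\|\Omega\|_{L^{s}(S^{mn-1})}$, while the remaining product is integrated coordinate-by-coordinate and bounded by a second H\"older with exponents $(p_{i}/s',(p_{i}/s')')$, legitimate because $s'<p_{i}$. The Sobolev relation $\sum 1/p_{i}=1/p+\alpha/n$ then makes the total exponent of $2^{k}$ collapse to $-n/p<0$, so the series over $2^{k}\ge R$ is summable and yields the far-part term $C\,R^{-n/p}\prod_{i}\|f_{i}\|_{L^{p_{i}}}$.

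Optimizing the pointwise bound over $R>0$ produces the Hedberg-type inequality
\[
|I_{\Omega,\alpha}^{(m)}(\overrightarrow{f})(x)|\;\le\;C\,[M_{\Omega}^{(m)}(\overrightarrow{f})(x)]^{n/(p\alpha+n)}\Bigl(\prod_{i=1}^{m}\|f_{i}\|_{L^{p_{i}}}\Bigr)^{p\alpha/(p\alpha+n)}.
\]
Setting $1/q:=\sum_{i}1/p_{i}=1/p+\alpha/n$, so $q=pn/(p\alpha+n)$, part~(i) follows by raising to the $p$-th power and integrating: the integrand becomes $[M_{\Omega}^{(m)}(\overrightarrow{f})]^{q}$ and the strong $L^{p_{1}}\times\cdots\times L^{p_{m}}\to L^{q}$ bound for $M_{\Omega}^{(m)}$ (which holds when $s'<p_{i}$, either as the $\alpha=0$ analogue of Theorem~\ref{teo1} or directly from the pointwise domination $M_{\Omega}^{(m)}(\overrightarrow{f})\le C\|\Omega\|_{L^{s}(S^{mn-1})}\prod_{i}(M(|f_{i}|^{s'}))^{1/s'}$ and the Hardy--Littlewood maximal theorem) closes the estimate, giving \eqref{3}. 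For part~(ii), the Hedberg pointwise bound embeds $\{|I_{\Omega,\alpha}^{(m)}(\overrightarrow{f})|>\lambda\}$ into a super-level set for $M_{\Omega}^{(m)}(\overrightarrow{f})$ at a rescaled threshold, and the weak $L^{q}$ bound for $M_{\Omega}^{(m)}$ at the endpoint $p_{i}=s'$ transfers to \eqref{4} by the same scalar distributional computation.

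The principal obstacle is the bookkeeping in the far-part estimate: the two successive H\"older applications together with the polar-coordinate size estimate for $\Omega$ on dyadic annuli must be orchestrated so that every power of $2^{k}$ cancels except for the summable negative power $-n/p$. The hypotheses $s'<p_{i}$ (respectively $\le$ in the endpoint case) and the identity $\sum 1/p_{i}=1/p+\alpha/n$ enter precisely at this cancellation, and once it is in hand the remainder of the argument is a routine consequence of Theorem~\ref{teo1} and standard maximal-function theory.
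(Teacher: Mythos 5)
Your proposal is correct, but it follows a genuinely different route from the paper. The paper proves Theorem \ref{teo2} via Lemma \ref{lemma2*} (Welland's inequality), namely the pointwise bound $|I_{\Omega ,\alpha }^{(m)}(\overrightarrow{f})(x)|\leq C[M_{\Omega ,\alpha +\epsilon }^{(m)}(\overrightarrow{f})(x)]^{1/2}[M_{\Omega ,\alpha -\epsilon }^{(m)}(\overrightarrow{f})(x)]^{1/2}$ for small $\epsilon$, and then invokes Theorem \ref{teo1} at the two perturbed exponents $\alpha \pm \epsilon$ (with targets $q_{1},q_{2}$ satisfying $\frac{p}{2q_{1}}+\frac{p}{2q_{2}}=1$) together with H\"{o}lder's inequality for the strong type, and a two-set splitting at an optimally chosen auxiliary level for the weak type. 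You instead prove a Hedberg-type inequality, splitting at a single scale $R$ and controlling the near part by $R^{\alpha }M_{\Omega }^{(m)}(\overrightarrow{f})(x)$ (the non-fractional maximal operator) and the far part by $R^{-n/p}\prod_{i}\Vert f_{i}\Vert _{L^{p_{i}}}$; your exponent bookkeeping in the far part is right (the total power of $2^{k}$ is $-(mn-\alpha )+\frac{mn}{s}+\frac{mn}{s^{\prime }}-n\sum_{i}\frac{1}{p_{i}}=-\frac{n}{p}$), and the optimization and the subsequent distributional computations for both \eqref{3} and \eqref{4} check out. The trade-offs: the paper's argument stays entirely inside the family $\{M_{\Omega ,\beta }^{(m)}\}_{\beta >0}$ and reuses Theorem \ref{teo1} verbatim in both the strong and weak cases, whereas your pointwise bound is not purely local (it carries the global factor $\prod_{i}\Vert f_{i}\Vert _{L^{p_{i}}}$) but reduces everything to the $\alpha =0$ maximal operator and ultimately to the scalar Hardy--Littlewood theorem. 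Two small caveats you should make explicit: the bound you need for $M_{\Omega }^{(m)}=M_{\Omega ,0}^{(m)}$ is formally outside Theorem \ref{teo1} as stated (which assumes $\alpha >0$), so you must rely on the direct domination $M_{\Omega }^{(m)}(\overrightarrow{f})\leq C\prod_{i}(M(|f_{i}|^{s^{\prime }}))^{1/s^{\prime }}$ that you mention; and in the endpoint case the resulting weak $L^{q}$ bound for this product requires H\"{o}lder's inequality in weak Lebesgue (Lorentz) spaces, which is standard but is asserted rather than proved in your sketch.
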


\begin{theorem}
\label{teo3}Let $0<\alpha <mn$, $1\leq s^{\prime }<\frac{mn}{\alpha }$, and $%
\Omega $ be homogeneous of degree zero on ${\mathbb{R}^{mn}}$ with $\Omega
\in L^{s}\left( S^{mn-1}\right) \left( s>1\right) $, $\frac{1}{s}+\frac{1}{%
s^{\prime }}=1$. Suppose that $f_{i}\in L_{w^{p_{i}}}^{p_{i}}\left( {\mathbb{%
R}^{n}}\right) $ with $s^{\prime }<p_{i}<\frac{mn}{\alpha }\left(
i=1,2,\ldots ,m\right) $ and $w\left( x\right) ^{s^{\prime }}\in \dbigcap
\limits_{i=1}^{n}A_{\frac{p}{s^{\prime }},\frac{q}{s^{\prime }}}$, where $%
\frac{1}{q_{i}}=\frac{1}{p_{i}}-\frac{\alpha }{mn}$. If let $\frac{1}{p}=%
\frac{1}{p_{1}}+\frac{1}{p_{2}}+\cdots +\frac{1}{p_{m}}-\frac{\alpha }{n}$,
then there is a constant $C>0$, independent of $f_{i}$, such that 
\begin{equation}
\left \Vert M_{\Omega ,\alpha }^{\left( m\right) }\left( \overrightarrow{f}%
\right) \right \Vert _{L_{w^{p}}^{p}\left( {\mathbb{R}^{n}}\right) }\leq
C\dprod \limits_{i=1}^{m}\left \Vert f_{i}\right \Vert
_{L_{w^{p_{i}}}^{p_{i}}\left( {\mathbb{R}^{n}}\right) }.  \label{5}
\end{equation}
\end{theorem}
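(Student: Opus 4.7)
\emph{Plan.} The strategy is to bound $M_{\Omega,\alpha}^{(m)}(\overrightarrow{f})$ pointwise by a product of classical (one-parameter) fractional maximal operators applied to $|f_i|^{s'}$, and then invoke the Muckenhoupt--Wheeden weighted $A_{p,q}$ theorem one factor at a time. This reduces the whole matter to scalar weighted inequalities that are already available.

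Fix $x\in\mathbb{R}^n$ and $r>0$. H\"older's inequality in $\overrightarrow{y}$ with exponents $(s,s')$ gives
\begin{equation*}
\frac{1}{r^{mn-\alpha}}\int_{|\overrightarrow{y}|<r}|\Omega(\overrightarrow{y})|\prod_{i=1}^{m}|f_i(x-y_i)|\,d\overrightarrow{y}\le\frac{1}{r^{mn-\alpha}}\Bigl(\int_{|\overrightarrow{y}|<r}|\Omega|^{s}\Bigr)^{1/s}\Bigl(\int_{|\overrightarrow{y}|<r}\prod_{i=1}^{m}|f_i(x-y_i)|^{s'}\Bigr)^{1/s'}.
\end{equation*}
Since $\Omega$ is homogeneous of degree zero and the sum-norm ball $\{|\overrightarrow{y}|<r\}$ is contained in the Euclidean ball of radius $r$ in $\mathbb{R}^{mn}$, polar coordinates yield $(\int_{|\overrightarrow{y}|<r}|\Omega|^{s})^{1/s}\le C\,r^{mn/s}\|\Omega\|_{L^{s}(S^{mn-1})}$. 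Because $\{|\overrightarrow{y}|<r\}\subset\prod_{i}\{|y_i|<r\}$, the second factor is dominated by $\prod_{i}(\int_{|y|<r}|f_i(x-y)|^{s'}dy)^{1/s'}$. Splitting $r^{\alpha-mn/s'}=\prod_{i}r^{\alpha/m-n/s'}$ and using that the supremum of a product is at most the product of the suprema gives the pointwise domination
\begin{equation*}
M_{\Omega,\alpha}^{(m)}(\overrightarrow{f})(x)\le C\,\|\Omega\|_{L^{s}(S^{mn-1})}\prod_{i=1}^{m}\bigl[M_{\alpha s'/m}(|f_i|^{s'})(x)\bigr]^{1/s'},
\end{equation*}
where $M_{\beta}$ denotes the classical fractional maximal operator on $\mathbb{R}^n$.

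Now set $\beta=\alpha s'/m$, $\tilde p_i=p_i/s'$, $\tilde q_i=q_i/s'$; the hypotheses translate to $1<\tilde p_i<n/\beta$, $1/\tilde q_i=1/\tilde p_i-\beta/n$, and $w^{s'}\in A_{\tilde p_i,\tilde q_i}$. The Muckenhoupt--Wheeden theorem for $M_{\beta}$ therefore gives $M_{\beta}:L^{\tilde p_i}((w^{s'})^{\tilde p_i})\to L^{\tilde q_i}((w^{s'})^{\tilde q_i})$, which after raising to the power $1/s'$ reads
\begin{equation*}
\bigl\|[M_{\beta}(|f_i|^{s'})]^{1/s'}\bigr\|_{L^{q_i}_{w^{q_i}}}\le C\,\|f_i\|_{L^{p_i}_{w^{p_i}}}.
\end{equation*}
Since $\sum_{i}1/q_i=\sum_{i}1/p_i-\alpha/n=1/p$, distributing the weight as $w^{p}=\prod_{i}w^{p^{2}/q_i}$ and applying H\"older's inequality with exponents $\{q_i/p\}_{i=1}^{m}$ yields $\|\prod_i g_i\|_{L^{p}_{w^{p}}}\le\prod_i\|g_i\|_{L^{q_i}_{w^{q_i}}}$ with $g_i:=[M_{\beta}(|f_i|^{s'})]^{1/s'}$. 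Combining the three displays above produces (\ref{5}). The main technical point is the bookkeeping of weight exponents in the last two steps: verifying that the hypothesis $w^{s'}\in A_{p_i/s',q_i/s'}$ together with the scaling $1/q_i-1/p_i=-\alpha/(mn)$ returns exactly the norms $\|f_i\|_{L^{p_i}_{w^{p_i}}}$ once the power $1/s'$ is restored and the $\ell^{p}$-weighted H\"older is applied; the pointwise reduction and the invocation of the scalar theorem are otherwise routine.
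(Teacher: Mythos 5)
Your pointwise reduction is fine up to and including the scalar weighted estimates: the bound $M_{\Omega,\alpha}^{(m)}(\overrightarrow{f})(x)\leq C\prod_{i=1}^{m}[M_{\alpha s'/m}(|f_i|^{s'})(x)]^{1/s'}$ is correct, and Muckenhoupt--Wheeden does give $\|[M_{\alpha s'/m}(|f_i|^{s'})]^{1/s'}\|_{L^{q_i}_{w^{q_i}}}\leq C\|f_i\|_{L^{p_i}_{w^{p_i}}}$. The gap is the final step. The inequality $\|\prod_i g_i\|_{L^{p}_{w^{p}}}\leq\prod_i\|g_i\|_{L^{q_i}_{w^{q_i}}}$ is not a consequence of H\"older's inequality and is false in general. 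With your splitting $w^{p}=\prod_i w^{p^{2}/q_i}$ and exponents $q_i/p$, H\"older produces $\prod_i\bigl(\int g_i^{q_i}w^{p}\,dx\bigr)^{p/q_i}$, so each factor carries the weight $w^{p}$, not $w^{q_i}$; and no other splitting $w^{p}=\prod_i w^{a_i}$ with $\sum_i a_i=p$ can do better, since landing on $w^{q_i}$ in the $i$-th factor forces $a_i q_i/p=q_i$, i.e. $a_i=p$ for every $i$, hence $\sum_i a_i=mp\neq p$ when $m>1$. Concretely, for $m=2$, $g_1=g_2=g$, $q_1=q_2=2p$, your claimed inequality reads $\bigl(\int g^{2p}w^{p}\bigr)^{1/p}\leq\bigl(\int g^{2p}w^{2p}\bigr)^{1/p}$, which fails whenever $w<1$ on the support of $g$.

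This is exactly the point where the multilinear structure cannot be discarded. The paper keeps it: Lemma \ref{lemma1*} majorizes $M_{\Omega,\alpha}^{(m)}(\overrightarrow{f})$ by $[M_{\alpha s'}^{(m)}(|f_1|^{s'},\dots,|f_m|^{s'})]^{1/s'}$, where $M_{\alpha s'}^{(m)}$ is the smooth multi-sublinear fractional maximal operator, and then invokes Shi--Tao's weighted bound for that operator (Lemma \ref{lemma34}) with exponents $p_i/s'$, $q_i/s'$ and weight $w^{s'}$. Your further majorization by the product $\prod_i M_{\alpha s'/m}(|f_i|^{s'})$ is a strictly larger quantity, and controlling its $L^{p}_{w^{p}}$ norm by the product of the individual $L^{q_i}_{w^{q_i}}$ norms genuinely requires more than $w^{s'}\in\bigcap_i A_{p_i/s',q_i/s'}$. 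To repair the argument you should stop at the multi-sublinear majorant and cite (or prove) the weighted boundedness of $M_{\alpha s'}^{(m)}$ itself, rather than factoring it into one-dimensional operators.
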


\begin{theorem}
\label{teo4}Suppose the same conditions and notations of that in Theorem \ref%
{teo3}, then there is a constant $C>0$, independent of $f_{i}$, such that%
\begin{equation}
\left \Vert I_{\Omega ,\alpha }^{\left( m\right) }\left( \overrightarrow{f}%
\right) \right \Vert _{L_{w^{p}}^{p}\left( {\mathbb{R}^{n}}\right) }\leq
C\dprod \limits_{i=1}^{m}\left \Vert f_{i}\right \Vert
_{L_{w^{p_{i}}}^{p_{i}}\left( {\mathbb{R}^{n}}\right) }.  \label{6}
\end{equation}
\end{theorem}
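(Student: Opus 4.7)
The plan is to reduce Theorem~\ref{teo4} to Theorem~\ref{teo3} through a Coifman--Fefferman type weighted comparison between the rough multilinear fractional integral and its multi-sublinear maximal analogue. Concretely, the key step is to establish
\begin{equation}\label{coif}
\bigl\|I_{\Omega,\alpha}^{(m)}(\overrightarrow{f})\bigr\|_{L_{w^{p}}^{p}({\mathbb{R}^{n}})}\leq C\,\bigl\|M_{\Omega,\alpha}^{(m)}(\overrightarrow{f})\bigr\|_{L_{w^{p}}^{p}({\mathbb{R}^{n}})},
\end{equation}
valid whenever $w^{p}\in A_{\infty}$. The hypothesis $w^{s^{\prime}}\in\bigcap_{i=1}^{m}A_{p_{i}/s^{\prime},\,q_{i}/s^{\prime}}$, together with the relations $1/q_{i}=1/p_{i}-\alpha/(mn)$ and $1/p=\sum_{i}1/p_{i}-\alpha/n$, places $w^{p}$ in the class $A_{\infty}$ by standard properties of the Muckenhoupt--Wheeden classes. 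Once \eqref{coif} is in hand, an immediate application of Theorem~\ref{teo3} to the right-hand side delivers the desired inequality \eqref{6}.

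The comparison \eqref{coif} will be proved through the standard good-$\lambda$ mechanism: for some $\delta>0$ depending only on the $A_{\infty}$ constant of $w^{p}$, and for all sufficiently small $\gamma>0$,
\[
w^{p}\bigl(\{x:\,|I_{\Omega,\alpha}^{(m)}(\overrightarrow{f})(x)|>2\lambda,\;M_{\Omega,\alpha}^{(m)}(\overrightarrow{f})(x)\leq\gamma\lambda\}\bigr)\leq C\gamma^{\delta}\,w^{p}\bigl(\{|I_{\Omega,\alpha}^{(m)}(\overrightarrow{f})|>\lambda\}\bigr),
\]
uniformly in $\lambda>0$. Integrating against $p\lambda^{p-1}\,d\lambda$ then yields \eqref{coif}. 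The good-$\lambda$ bound itself is derived via a Calder\'{o}n--Zygmund decomposition of the level set $\{M_{\Omega,\alpha}^{(m)}(\overrightarrow{f})>\gamma\lambda\}$ into maximal dyadic cubes, followed by a localized analysis of $I_{\Omega,\alpha}^{(m)}$ on each cube. The rough factor $\Omega$ is handled by a dyadic annular decomposition $\{2^{k}\leq|\overrightarrow{y}|<2^{k+1}\}$ together with H\"{o}lder's inequality at the exponents $(s,s^{\prime})$: the assumptions $1\leq s^{\prime}<mn/\alpha$ and $s^{\prime}<p_{i}$ ensure, respectively, that the resulting power $|\overrightarrow{y}|^{\alpha-mn/s^{\prime}}$ is summable at infinity and that the function factors remain in the relevant Lebesgue spaces.

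The principal obstacle is precisely this good-$\lambda$ inequality, since the multilinear structure entangles the $m$ function factors through a single integration over ${\mathbb{R}^{mn}}$ and the roughness of $\Omega$ must be reconciled with this coupling \emph{uniformly} across the dyadic scales $k$. Once the good-$\lambda$ bound is secured, the passage from \eqref{coif} to \eqref{6} is a formal integration combined with Theorem~\ref{teo3}, and all conditions on the weight $w$ are exactly those needed for the $A_\infty$ input of the Coifman-type comparison.
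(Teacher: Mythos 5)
Your reduction of Theorem \ref{teo4} to a Coifman--Fefferman comparison with $M_{\Omega,\alpha}^{(m)}$ is a genuinely different strategy from the paper's, but as written it has a real gap: everything rests on the good-$\lambda$ inequality between $I_{\Omega,\alpha}^{(m)}$ and $M_{\Omega,\alpha}^{(m)}$, and you only assert it. The classical good-$\lambda$ argument for $I_{\alpha}$ versus $M_{\alpha}$ hinges on a pointwise comparison of the kernel at two points of the same Calder\'on--Zygmund cube: for $x,x'\in Q$ and $y$ outside a fixed dilate of $Q$ one has $|x-y|^{\alpha-n}\leq C|x'-y|^{\alpha-n}$, which lets you freeze the far part of the integral on $Q$. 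With the kernel $\Omega(\overrightarrow{y})\,|\overrightarrow{y}|^{\alpha-mn}$ and $\Omega$ merely in $L^{s}(S^{mn-1})$, the factor $\Omega$ admits no pointwise control under a shift of the pole $x$, so exactly this step fails; it must be replaced by an $L^{s}$-average argument over the cube, and in the rough setting the natural comparison object is then not $M_{\Omega,\alpha}^{(m)}$ itself but rather $[M_{\alpha s'}^{(m)}(|f_{1}|^{s'},\ldots,|f_{m}|^{s'})]^{1/s'}$. Your dyadic annuli plus H\"older at $(s,s')$ control the size of $\Omega$ but not this translation issue, which is where the difficulty actually sits. Two further points are left unjustified: that the hypothesis $w^{s'}\in\bigcap_{i}A_{p_{i}/s',\,q_{i}/s'}$ really places $w^{p}$ in $A_{\infty}$, and the usual a priori finiteness (or truncation) needed to integrate the good-$\lambda$ estimate into the norm inequality.

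For contrast, the paper sidesteps all of this with a purely pointwise device. Lemma \ref{lemma2*} (Welland's inequality) gives $|I_{\Omega,\alpha}^{(m)}(\overrightarrow{f})(x)|\leq C\,[M_{\Omega,\alpha+\epsilon}^{(m)}(\overrightarrow{f})(x)]^{1/2}[M_{\Omega,\alpha-\epsilon}^{(m)}(\overrightarrow{f})(x)]^{1/2}$; Lemma \ref{lemma31} perturbs the weight classes so that Theorem \ref{teo3} applies at the shifted orders $\alpha\pm\epsilon$ with exponents $\beta_{1},\beta_{2}$ satisfying $\frac{p}{2\beta_{1}}+\frac{p}{2\beta_{2}}=1$; H\"older's inequality then combines the two bounds. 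If you wish to salvage your plan you must actually prove the rough multilinear Coifman-type estimate (a substantial piece of work in its own right); otherwise the Welland route is the efficient one here.
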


\begin{theorem}
\label{teo5}Let $0<\alpha <mn$, $1\leq s^{\prime }<\frac{mn}{\alpha }$, and $%
\Omega $ be homogeneous of degree zero on ${\mathbb{R}^{mn}}$ with $\Omega
\in L^{s}\left( S^{mn-1}\right) \left( s>1\right) $, $\frac{1}{s}+\frac{1}{%
s^{\prime }}=1$. Assume that $\left( u,\upsilon \right) $ is a pair of
weights, $s^{\prime }<p_{i}<q_{i}<\infty $, for each $i=1,2,\ldots ,m$. Let $%
\frac{1}{p}=\frac{1}{q_{1}}+\frac{1}{q_{2}}+\cdots +\frac{1}{q_{m}}$, $\frac{%
1}{p_{i}}+\frac{1}{p_{i}^{\prime }}=1$. If there exists $r_{i}>1$ such that,
for every cube $Q$ in ${\mathbb{R}^{n}}$,%
\begin{equation}
\left \vert Q\right \vert ^{\frac{s^{\prime }}{q_{i}}+\frac{\alpha s^{\prime
}}{mn}-\frac{s^{\prime }}{p_{i}}}\left( \frac{1}{\left \vert Q\right \vert }%
\dint \limits_{Q}u\left( x\right) dx\right) ^{\frac{s^{\prime }}{q_{i}}%
}\left( \frac{1}{\left \vert Q\right \vert }\dint \limits_{Q}\upsilon \left(
x\right) ^{r_{i}\left( 1-\left( \frac{p_{i}}{s^{\prime }}\right) ^{\prime
}\right) }dx\right) ^{\frac{1}{r_{i}\left( \frac{p_{i}}{s^{\prime }}\right)
^{\prime }}}\leq C,  \label{7}
\end{equation}%
then for arbitrary $f_{i}\in L_{\upsilon }^{p_{i}}\left( {\mathbb{R}^{n}}%
\right) $, there is a constant $C>0$, independent of $f_{i}$, such that%
\begin{equation}
\left \Vert M_{\Omega ,\alpha }^{\left( m\right) }\left( \overrightarrow{f}%
\right) \right \Vert _{L_{u}^{p}\left( {\mathbb{R}^{n}}\right) }\leq C\dprod
\limits_{i=1}^{m}\left \Vert f_{i}\right \Vert _{L_{\upsilon }^{p_{i}}\left( 
{\mathbb{R}^{n}}\right) }.  \label{8}
\end{equation}
\end{theorem}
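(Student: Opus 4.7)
The plan is to reduce the estimate to the corresponding two-weighted strong-type bound for the classical multi-sublinear fractional maximal operator $M_{\alpha s'}^{(m)}$, via a H\"older--homogeneity argument in the $\overrightarrow{y}$ variable, and then to invoke the two-weight strong-type theorem of Shi--Tao \cite{Shi} for the smooth multilinear fractional integral under the bumped Muckenhoupt--Wheeden condition that (\ref{7}) realizes after rescaling.

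First, I would apply H\"older's inequality with exponents $s$ and $s'$ to the integral defining $M_{\Omega,\alpha}^{(m)}(\overrightarrow{f})(x)$ at each radius $r>0$. Since $\Omega$ is homogeneous of degree zero, polar coordinates on $\mathbb{R}^{mn}$ yield $(\int_{|\overrightarrow{y}|<r} |\Omega(\overrightarrow{y})|^{s}\,d\overrightarrow{y})^{1/s} = C\,r^{mn/s}\|\Omega\|_{L^{s}(S^{mn-1})}$. Combining this with $1/s = 1-1/s'$ and then taking the supremum in $r$ gives the pointwise bound
\begin{equation*}
M_{\Omega,\alpha}^{(m)}(\overrightarrow{f})(x) \leq C\|\Omega\|_{L^{s}(S^{mn-1})}\bigl[M_{\alpha s'}^{(m)}(|f_1|^{s'},\dots,|f_m|^{s'})(x)\bigr]^{1/s'}.
\end{equation*}

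Next, I would take $L^{p}_{u}$ norms and use $\|g^{1/s'}\|_{L^{p}_{u}} = \|g\|_{L^{p/s'}_{u}}^{1/s'}$ together with $\|\,|f_i|^{s'}\|_{L^{p_i/s'}_{v}} = \|f_i\|_{L^{p_i}_{v}}^{s'}$. Setting $P = p/s'$, $P_i = p_i/s'$, $Q_i = q_i/s'$, the hypotheses $s'<p_i<q_i$ and $1/p=\sum 1/q_i$ rescale to $1<P_i<Q_i$ and $1/P=\sum 1/Q_i$, so (\ref{8}) becomes equivalent to
\begin{equation*}
\|M_{\alpha s'}^{(m)}(g_1,\dots,g_m)\|_{L^{P}_{u}} \leq C\prod_{i=1}^{m} \|g_i\|_{L^{P_i}_{v}}.
\end{equation*}
Dividing the exponents in (\ref{7}) through by $s'$ converts that condition into precisely a bumped $A_{P_i,Q_i}^{\alpha s'}$-type condition on the pair $(u,v)$ at bump level $r_i$, which is the natural hypothesis for a two-weight strong-type inequality for $M_{\alpha s'}^{(m)}$.

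Finally, to conclude I would dominate $M_{\alpha s'}^{(m)}(\overrightarrow{g})$ pointwise by the smooth multilinear fractional integral $I_{\alpha s'}^{(m)}(|\overrightarrow{g}|)$ (using $|\overrightarrow{y}|^{\alpha s'-mn}\geq r^{\alpha s'-mn}$ on $\{|\overrightarrow{y}|<r\}$) and invoke the Shi--Tao two-weighted strong-type theorem for $I_{\alpha s'}^{(m)}$ under the bumped condition above; this yields the displayed bound and hence (\ref{8}). The main obstacle is precisely this underlying smooth two-weight strong-type estimate, whose proof (essentially a Calder\'on--Zygmund / good-$\lambda$ argument adapted to the product setting together with a P\'erez-type power-bump lemma) is the substantive content but is already available in \cite{Shi}; once it is granted, the H\"older reduction and the routine rescaling of exponents make the passage from the smooth to the rough kernel automatic.
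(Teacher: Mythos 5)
Your reduction via H\"older's inequality and homogeneity to the pointwise bound $M_{\Omega,\alpha}^{(m)}(\overrightarrow{f})(x)\leq C[M_{\alpha s'}^{(m)}(|f_1|^{s'},\dots,|f_m|^{s'})(x)]^{1/s'}$, and the subsequent rescaling of exponents and of the condition (\ref{7}), is exactly what the paper does (this is Lemma \ref{lemma1*} followed by the norm identities $\|g^{1/s'}\|_{L^{p}_{u}}=\|g\|_{L^{p/s'}_{u}}^{1/s'}$). The gap is in your final step. You propose to dominate $M_{\alpha s'}^{(m)}(\overrightarrow{g})$ pointwise by the smooth fractional integral $I_{\alpha s'}^{(m)}(|\overrightarrow{g}|)$ and then invoke a Shi--Tao two-weight strong-type theorem for $I_{\alpha s'}^{(m)}$. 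But the hypothesis (\ref{7}) carries a power bump $r_i>1$ only on the weight $\upsilon$; the weight $u$ enters through a plain average. A condition of this single-bump type is sufficient for the two-weight strong-type boundedness of the fractional \emph{maximal} operator, but it is not sufficient for the fractional \emph{integral}: for $I_{\alpha}^{(m)}$ one needs a bump on $u$ as well, as is visible in the hypothesis of Theorem \ref{teo6} of this paper, where the term $\bigl(\frac{1}{|Q|}\int_{Q}u(x)^{r_i}dx\bigr)^{\frac{s'}{r_i mp}}$ appears, and likewise in the Garc\'{\i}a-Cuerva--Martell and P\'erez results in the linear case. The pointwise domination $M_{\alpha s'}^{(m)}\leq I_{\alpha s'}^{(m)}(|\cdot|)$ is therefore too lossy here: it discards exactly the localization that makes the weaker single-bump hypothesis adequate for the maximal operator, and the two-weight theorem you would need for $I_{\alpha s'}^{(m)}$ under (\ref{7}) alone is not available (and is false in general).

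The repair is simple and is what the paper does: after the H\"older reduction, apply the Shi--Tao two-weight strong-type theorem for the multi-sublinear fractional maximal operator $M_{\alpha s'}^{(m)}$ itself (Lemma \ref{lemma42} in the paper), whose hypothesis is precisely the rescaled form of (\ref{7}) with exponents $p_i/s'$, $q_i/s'$, and order $\alpha s'$. No passage through the fractional integral is needed, and the rest of your argument then goes through verbatim.
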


\begin{theorem}
\label{teo6}Let $0<\alpha <mn$, $1\leq s^{\prime }<\frac{mn}{\alpha }$, and $%
\Omega $ be homogeneous of degree zero on ${\mathbb{R}^{mn}}$ with $\Omega
\in L^{s}\left( S^{mn-1}\right) \left( s>1\right) $, $\frac{1}{s}+\frac{1}{%
s^{\prime }}=1$, $\left( u,\upsilon \right) $ is a pair of weights. If for
every $i=1,2,\ldots ,m$, $s^{\prime }<p_{i}<mp<\infty $ and there exists $%
r_{i}>1$ such that for every cube $Q$ in ${\mathbb{R}^{n}}$, such that%
\begin{equation*}
\left \vert Q\right \vert ^{\frac{s^{\prime }}{mp}+\frac{\alpha s^{\prime }}{%
mn}-\frac{s^{\prime }}{p_{i}}}\left( \frac{1}{\left \vert Q\right \vert }%
\dint \limits_{Q}u\left( x\right) ^{r_{i}}dx\right) ^{\frac{s^{\prime }}{%
r_{i}mp}}\left( \frac{1}{\left \vert Q\right \vert }\dint
\limits_{Q}\upsilon \left( x\right) ^{r_{i}\left( 1-\left( \frac{p_{i}}{%
s^{\prime }}\right) ^{\prime }\right) }dx\right) ^{\frac{1}{r_{i}\left( 
\frac{p_{i}}{s^{\prime }}\right) ^{\prime }}}\leq C,
\end{equation*}%
then for arbitrary $f_{i}\in L_{\upsilon }^{p_{i}}\left( {\mathbb{R}^{n}}%
\right) $, there is a constant $C>0$, independent of $f_{i}$, such that%
\begin{equation}
\left \Vert I_{\Omega ,\alpha }^{\left( m\right) }\left( \overrightarrow{f}%
\right) \right \Vert _{L_{u}^{p}\left( {\mathbb{R}^{n}}\right) }\leq C\dprod
\limits_{i=1}^{m}\left \Vert f_{i}\right \Vert _{L_{\upsilon }^{p_{i}}\left( 
{\mathbb{R}^{n}}\right) }.  \label{9}
\end{equation}
\end{theorem}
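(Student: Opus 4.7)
My plan is to reduce Theorem~\ref{teo6} to the corresponding two-weight bound for the multi-sublinear fractional maximal operator (Theorem~\ref{teo5}) by means of a Welland-type pointwise inequality, followed by a Cauchy--Schwarz step in the weighted norm. The free parameter $\epsilon$ introduced by Welland is then absorbed using the slack provided by $r_i>1$ in hypothesis of Theorem~\ref{teo6}.

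\textbf{Step 1 (Welland-type pointwise inequality).} For every $\epsilon\in(0,\min(\alpha,mn-\alpha))$ I would first prove
\begin{equation*}
I_{\Omega,\alpha}^{(m)}(\overrightarrow{f})(x)\le C\bigl[M_{\Omega,\alpha-\epsilon}^{(m)}(\overrightarrow{f})(x)\bigr]^{1/2}\bigl[M_{\Omega,\alpha+\epsilon}^{(m)}(\overrightarrow{f})(x)\bigr]^{1/2}.
\end{equation*}
This is obtained by splitting the defining integral of $I_{\Omega,\alpha}^{(m)}$ at $\left\vert\overrightarrow{y}\right\vert=R$, decomposing each region dyadically, bounding the inner sum by $CR^{\epsilon}M_{\Omega,\alpha-\epsilon}^{(m)}(\overrightarrow{f})(x)$ and the outer sum by $CR^{-\epsilon}M_{\Omega,\alpha+\epsilon}^{(m)}(\overrightarrow{f})(x)$, and then optimizing in $R$.

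\textbf{Step 2 (Cauchy--Schwarz and reduction to Theorem~\ref{teo5}).} Writing $u=u^{1/2}\cdot u^{1/2}$ inside the $L_{u}^{p}$-integral and applying the Cauchy--Schwarz inequality gives
\begin{equation*}
\bigl\|I_{\Omega,\alpha}^{(m)}(\overrightarrow{f})\bigr\|_{L_{u}^{p}\left({\mathbb{R}^{n}}\right)}\le C\bigl\|M_{\Omega,\alpha-\epsilon}^{(m)}(\overrightarrow{f})\bigr\|_{L_{u}^{p}\left({\mathbb{R}^{n}}\right)}^{1/2}\bigl\|M_{\Omega,\alpha+\epsilon}^{(m)}(\overrightarrow{f})\bigr\|_{L_{u}^{p}\left({\mathbb{R}^{n}}\right)}^{1/2}.
\end{equation*}
Theorem~\ref{teo5} is then invoked for each factor, with the shifted index $\alpha\mp\epsilon$ and the common choice $q_{i}=mp$ of the auxiliary parameters (so that $\sum_{i}1/q_{i}=1/p$ and the output exponent is exactly $p$).

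\textbf{Main obstacle: transferring the weight condition.} The crucial technical point is to deduce the two versions of the weight hypothesis of Theorem~\ref{teo5} at $\alpha\mp\epsilon$ from the single hypothesis of Theorem~\ref{teo6} at $\alpha$. H\"older's inequality yields $\bigl(\tfrac{1}{|Q|}\int_{Q}u\bigr)^{s'/mp}\le\bigl(\tfrac{1}{|Q|}\int_{Q}u^{r_{i}}\bigr)^{s'/(r_{i}mp)}$, which converts the plain $u$-average of Theorem~\ref{teo5} into the $r_{i}$-bumped average of Theorem~\ref{teo6}; the two $|Q|$-power discrepancies $|Q|^{\pm\epsilon s'/mn}$ produced by the shift $\alpha\mapsto\alpha\pm\epsilon$ are of opposite sign and cancel when the two estimates are combined through the geometric mean in Step~2. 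Making this cancellation rigorous---either by refining Theorem~\ref{teo5} to track a cube-dependent constant in its weight hypothesis, or by forming the geometric mean at the level of the weight condition itself before applying Theorem~\ref{teo5}---is the principal difficulty, and is precisely where the strict inequality $r_{i}>1$ and the range $s'<p_{i}<mp$ provide the room needed to absorb any residual slack. Choosing $\epsilon$ small (depending on $r_{i}$, $s$, and the $p_{i}$) then yields the desired bound~(\ref{9}).
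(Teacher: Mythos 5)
Your overall strategy (Welland's pointwise inequality, then H\"older, then Theorem \ref{teo5} applied to $M_{\Omega ,\alpha \pm \epsilon }^{\left( m\right) }$) is the same as the paper's, but your Step 2 contains a gap that your own ``main obstacle'' paragraph identifies without resolving, and the fixes you sketch would not work as stated. If you split $u=u^{1/2}\cdot u^{1/2}$ and apply Cauchy--Schwarz, both maximal factors are measured in $L_{u}^{p}$ with the same target exponent $p$ and the same choice $q_{i}=mp$; applying Theorem \ref{teo5} to $M_{\Omega ,\alpha \pm \epsilon }^{\left( m\right) }$ then requires the two-weight condition (\ref{7}) with $\left\vert Q\right\vert $-exponent $\frac{s^{\prime }}{mp}+\frac{\left( \alpha \pm \epsilon \right) s^{\prime }}{mn}-\frac{s^{\prime }}{p_{i}}$, which differs from the assumed condition by a factor $\left\vert Q\right\vert ^{\pm \epsilon s^{\prime }/\left( mn\right) }$ that is unbounded over all cubes (as $\left\vert Q\right\vert \rightarrow \infty $ in one case and $\left\vert Q\right\vert \rightarrow 0$ in the other). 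Since Theorem \ref{teo5} is a global statement invoked for each factor as a black box, there is no mechanism by which the two divergent conditions ``cancel through the geometric mean'': the cancellation you hope for lives at the level of the weight condition, not at the level of the two separate norm estimates you actually need.

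The paper closes this gap by performing the H\"older step with exponents $\frac{p}{2l_{1}}+\frac{p}{2l_{2}}=1$, where $\frac{1}{l_{1}}=\frac{1}{p}-\frac{\epsilon }{n}$ and $\frac{1}{l_{2}}=\frac{1}{p}+\frac{\epsilon }{n}$, so the two maximal factors land in $L_{u^{l_{1}/p}}^{l_{1}}$ and $L_{u^{l_{2}/p}}^{l_{2}}$ rather than both in $L_{u}^{p}$. With the matching auxiliary exponents $\frac{1}{g_{i}}=\frac{1}{mp}-\frac{\epsilon }{mn}$ and $\frac{1}{h_{i}}=\frac{1}{mp}+\frac{\epsilon }{mn}$, the shift of $\alpha $ by $\pm \epsilon $ is exactly offset by the shift of $q_{i}$, so the $\left\vert Q\right\vert $-exponent in condition (\ref{7}) is unchanged and no unbounded power of $\left\vert Q\right\vert $ ever appears. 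The only remaining discrepancy is that the weight is $u^{l_{1}/p}$ (resp.\ $u^{l_{2}/p}$) instead of $u$, and this is where $r_{i}>1$ actually enters: choosing $\epsilon $ small enough that $1<\frac{l_{1}}{p}<r_{i}$ (and $\frac{l_{2}}{p}<1<r_{i}$), H\"older's inequality bounds the average of $u^{l_{1}/p}$ by the $r_{i}$-bumped average appearing in the hypothesis of Theorem \ref{teo6}. Your appeal to $r_{i}>1$ to ``absorb residual slack'' in the $\left\vert Q\right\vert $-powers is aimed at the wrong quantity; the slack it provides concerns the integrability of $u$, not the cube-size exponent. To repair your argument you should replace the $u^{1/2}\cdot u^{1/2}$ splitting by the $\left( l_{1},l_{2}\right) $-splitting above.
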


\begin{remark}
Theorem \ref{teo2} implies the well-known Hardy-Littlewood-Sobolev Theorem 
\cite{Stein93}, i.e. the case $m=1$, $\Omega \equiv 1$ and $s=\infty $.
Theorem \ref{teo2} implies Theorem 1 in \cite{Kenig} when $\Omega \equiv 1$
and $s=\infty $. One can also obtain from (\ref{4}) of Theorem \ref{teo2}
that the operator $I_{\Omega ,\alpha }$ is weak type $\left( 1,\frac{n}{%
n-\alpha }\right) $. Theorem \ref{teo4} extends the weighted boundedness of $%
I_{\Omega ,\alpha }$ in \cite{Ding}. If $m=1$, $\Omega \equiv 1$ and $%
s=\infty $, Theorem \ref{6} becomes the results of Garc\'{\i}a-Cuerva and
Martell \cite{Garcia}, where the two weighted boundedness of $I_{\alpha }$
is considered.
\end{remark}

\begin{theorem}
\label{teo53}Let $0<\alpha <mn$, $1\leq s^{\prime }<\frac{mn}{\alpha }$, and 
$\Omega $ be homogeneous of degree zero on ${\mathbb{R}^{mn}}$ with $\Omega
\in L^{s}\left( S^{mn-1}\right) \left( s>1\right) $, $\frac{1}{s}+\frac{1}{%
s^{\prime }}=1$. Assume that $\left( u,\upsilon \right) $ is a pair of
weights, $s^{\prime }\leq p_{i}\leq q_{i}<\infty $ for each $i=1,2,\ldots ,m$%
. $\frac{1}{p}=\frac{1}{q_{1}}+\frac{1}{q_{2}}+\cdots +\frac{1}{q_{m}}$, $%
\frac{1}{p_{i}}+\frac{1}{p_{i}^{\prime }}=1$. If $\left( u,\upsilon \right)
\in \dbigcap \limits_{i=1}^{m}A_{\frac{p_{i}}{s^{\prime }},\frac{q_{i}}{%
s^{\prime }}}^{\frac{\alpha s^{\prime }}{m}}$, then for every $f_{i}\in
L_{\upsilon }^{p_{i}}\left( {\mathbb{R}^{n}}\right) $, there is a constant $%
C $, independent of $f_{i}$, such that%
\begin{equation}
\left \Vert M_{\Omega ,\alpha }^{\left( m\right) }\left( \overrightarrow{f}%
\right) \right \Vert _{L_{u}^{p,\infty }\left( {\mathbb{R}^{n}}\right) }\leq
C\dprod \limits_{i=1}^{m}\left \Vert f_{i}\right \Vert _{L_{\upsilon
}^{p_{i}}\left( {\mathbb{R}^{n}}\right) }.  \label{52}
\end{equation}
\end{theorem}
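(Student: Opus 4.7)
The plan is to reduce the rough operator $M_{\Omega,\alpha}^{(m)}$ to its smooth analogue $M_{\alpha s'}^{(m)}$ by a H\"older-inequality trick, and then apply Shi and Tao's two-weighted weak-type estimate for the latter.

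First I would apply H\"older's inequality with exponents $s$ and $s'$ inside the defining integral of $M_{\Omega,\alpha}^{(m)}$. Using polar coordinates and the equivalence of the norm $|\vec y|=|y_1|+\cdots+|y_m|$ with the Euclidean norm on $\mathbb{R}^{mn}$, together with the homogeneity of $\Omega$, one obtains
$$\left(\int_{|\vec y|<r}|\Omega(\vec y)|^{s}\,d\vec y\right)^{1/s}\le C\,r^{mn/s}\|\Omega\|_{L^{s}(S^{mn-1})}.$$
Feeding this back in produces the pointwise majorization
$$M_{\Omega,\alpha}^{(m)}(\vec f)(x)\le C\|\Omega\|_{L^{s}(S^{mn-1})}\bigl[M_{\alpha s'}^{(m)}(|\vec f|^{s'})(x)\bigr]^{1/s'},$$
where $|\vec f|^{s'}:=(|f_1|^{s'},\dots,|f_m|^{s'})$ and $M_{\alpha s'}^{(m)}$ is the smooth Kenig--Stein multi-sublinear fractional maximal operator; the hypothesis $1\le s'<mn/\alpha$ ensures $0<\alpha s'<mn$, so this smooth operator lies in the admissible range.

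Next I would invoke Shi and Tao's two-weighted weak-type theorem for $M_\alpha^{(m)}$ with the parameters shifted according to $\alpha\mapsto\alpha s'$, $p_i\mapsto p_i/s'$, $q_i\mapsto q_i/s'$, $p\mapsto p/s'$. The relation $1/p=\sum_i 1/q_i$ is preserved under this rescaling, and the assumption $(u,\upsilon)\in\bigcap_{i=1}^{m}A_{p_i/s',q_i/s'}^{\alpha s'/m}$ is precisely the $A_{p,q}^{\alpha}$-type weight condition demanded by the Shi--Tao result after the substitution. This yields
$$\bigl\|M_{\alpha s'}^{(m)}(|\vec f|^{s'})\bigr\|_{L_u^{p/s',\infty}}\le C\prod_{i=1}^{m}\bigl\||f_i|^{s'}\bigr\|_{L_\upsilon^{p_i/s'}}=C\prod_{i=1}^{m}\|f_i\|_{L_\upsilon^{p_i}}^{s'}.$$

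To finish, the elementary identity $\|g^{1/s'}\|_{L_u^{p,\infty}}=\|g\|_{L_u^{p/s',\infty}}^{1/s'}$, obtained by the change of variables $\lambda=t^{1/s'}$ in the distribution function, allows the preceding display to be raised to the power $1/s'$, which delivers \eqref{52}. The main obstacle I anticipate lies in the endpoint case $s'=p_i$: there the rescaled index $p_i/s'=1$ forces use of the $A_{1,q_i/s'}^{\alpha s'/m}$ condition in its essential-supremum form, and one must verify that Shi--Tao's weak-type machinery still applies at that endpoint; some additional bookkeeping is also needed to confirm that the rescaled indices $p_i/s'\ge 1$, $q_i/s'<\infty$, and $\alpha s'<mn$ all remain within the admissible range of the smooth result.
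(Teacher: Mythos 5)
Your proposal matches the paper's proof essentially step for step: the paper's Lemma \ref{lemma1*} is exactly your H\"older-based pointwise domination $M_{\Omega ,\alpha }^{\left( m\right) }(\overrightarrow{f})\leq C\bigl[M_{\alpha s^{\prime }}^{\left( m\right) }(|f_{1}|^{s^{\prime }},\ldots ,|f_{m}|^{s^{\prime }})\bigr]^{1/s^{\prime }}$, and the paper then applies the Shi--Tao weak-type result (Lemma \ref{lemma52}) with the rescaled indices $p_{i}/s^{\prime }$, $q_{i}/s^{\prime }$, $\alpha s^{\prime }$ together with the substitution $\theta =\lambda ^{s^{\prime }}$ in the distribution function, which is your final identity. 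The endpoint $p_{i}=s^{\prime }$ you flag is harmless because Lemma \ref{lemma52} is stated for $1\leq p_{i}\leq q_{i}<\infty $, so the case $p_{i}/s^{\prime }=1$ is already covered.
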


\begin{remark}
We note that if a pair weights $\left( u,\upsilon \right) $ satisfies (\ref%
{7}) with $r_{i}>1$, then $\left( u,\upsilon \right) \in \dbigcap
\limits_{i=1}^{m}A_{\frac{p_{i}}{s^{\prime }},\frac{q_{i}}{s^{\prime }}}^{%
\frac{\alpha s^{\prime }}{m}}$. This result says that, by $A_{\frac{p_{i}}{%
s^{\prime }},\frac{q_{i}}{s^{\prime }}}^{\frac{\alpha s^{\prime }}{m}} $,
which is weaker than the condition (\ref{7}), the operator $M_{\Omega
,\alpha }^{\left( m\right) }$ turns out to be of weak type.
\end{remark}

\section{Product $L^{p}$-estimates for $I_{\Omega ,\protect \alpha }^{\left(
m\right) }$ and $M_{\Omega ,\protect \alpha }^{\left( m\right) }$}

To prove Theorems \ref{teo1} and \ref{teo2}, we need some lemmas.

\begin{lemma}
\label{lemma1*}Let $0<\alpha <mn$, $1\leq s^{\prime }<\frac{mn}{\alpha }$,
and $\Omega $ be homogeneous of degree zero on ${\mathbb{R}^{mn}}$ with $%
\Omega \in L^{s}\left( S^{mn-1}\right) \left( s>1\right) $, $\frac{1}{s}+%
\frac{1}{s^{\prime }}=1$, assume that the function $f_{i}\in L^{p_{i}}\left( 
{\mathbb{R}^{n}}\right) $ with $1\leq p_{i}\leq \infty \left( i=1,2,\ldots
,m\right) $, then there exists a constant $C>0$ such that for any $x\in {%
\mathbb{R}^{n}}$,%
\begin{equation}
M_{\Omega ,\alpha }^{\left( m\right) }\left( \overrightarrow{f}\right)
\left( x\right) \leq C\left[ M_{\alpha s^{\prime }}^{\left( m\right) }\left(
\left \vert f_{1}\right \vert ^{s^{\prime }},\left \vert f_{2}\right \vert
^{s^{\prime }},\ldots ,\left \vert f_{m}\right \vert ^{s^{\prime }}\right)
\left( x\right) \right] ^{\frac{1}{s^{\prime }}}.  \label{1*}
\end{equation}
\end{lemma}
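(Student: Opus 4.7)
\medskip

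\noindent\textbf{Proof plan.} The plan is to bound the integrand of $M_{\Omega,\alpha}^{(m)}$ by Hölder's inequality with exponents $s$ and $s'$, peeling the kernel $\Omega$ off the product $\prod_i |f_i(x-y_i)|$. Fix $r>0$ and write
\begin{equation*}
\frac{1}{r^{mn-\alpha}}\int_{|\vec{y}|<r}|\Omega(\vec{y})|\prod_{i=1}^{m}|f_i(x-y_i)|\,d\vec{y}
\leq \frac{1}{r^{mn-\alpha}}\left(\int_{|\vec{y}|<r}|\Omega(\vec{y})|^{s}d\vec{y}\right)^{\!1/s}\!\left(\int_{|\vec{y}|<r}\prod_{i=1}^{m}|f_i(x-y_i)|^{s'}d\vec{y}\right)^{\!1/s'}.
\end{equation*}

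\noindent The first step is to control the $\Omega$-factor. Since $|\vec{y}|=|y_1|+\cdots+|y_m|$ is equivalent to the Euclidean norm on $\mathbb{R}^{mn}$, the ball $\{|\vec{y}|<r\}$ is contained in the Euclidean ball of radius $r$ in $\mathbb{R}^{mn}$. Passing to polar coordinates in $\mathbb{R}^{mn}$ and using that $\Omega$ is homogeneous of degree zero gives
\begin{equation*}
\int_{|\vec{y}|<r}|\Omega(\vec{y})|^{s}d\vec{y}\leq \int_{0}^{r}\rho^{mn-1}d\rho\int_{S^{mn-1}}|\Omega(\vec{y}')|^{s}d\sigma(\vec{y}')=\frac{r^{mn}}{mn}\,\|\Omega\|_{L^{s}(S^{mn-1})}^{s},
\end{equation*}
so the first factor is bounded by $C\,r^{mn/s}=C\,r^{mn(1-1/s')}$.

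\noindent Plugging this in and collecting the powers of $r$, the exponent becomes $-(mn-\alpha)+mn(1-1/s')=\alpha-mn/s'=-(mn-\alpha s')/s'$. Thus
\begin{equation*}
\frac{1}{r^{mn-\alpha}}\int_{|\vec{y}|<r}|\Omega(\vec{y})|\prod_{i=1}^{m}|f_i(x-y_i)|\,d\vec{y}\leq C\left[\frac{1}{r^{mn-\alpha s'}}\int_{|\vec{y}|<r}\prod_{i=1}^{m}|f_i(x-y_i)|^{s'}d\vec{y}\right]^{1/s'}.
\end{equation*}
Here the hypothesis $s'<mn/\alpha$ guarantees $\alpha s'<mn$, so the quantity on the right is a legitimate multi-sublinear fractional maximal integrand. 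Taking the supremum over $r>0$ of both sides and using that $x\mapsto x^{1/s'}$ commutes with suprema yields the desired pointwise inequality.

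\noindent The main (and essentially only) obstacle is the bookkeeping for the mixed norm $|\vec{y}|=\sum_i|y_i|$ versus the Euclidean norm on $\mathbb{R}^{mn}$, which is handled by the equivalence of norms; everything else is a direct Hölder plus polar coordinates computation.
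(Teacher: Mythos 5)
Your proof is correct and follows essentially the same route as the paper: Hölder's inequality with exponents $s$ and $s'$ to separate $\Omega$ from the product of the $f_i$, the bound $\bigl(\int_{|\vec y|<r}|\Omega|^{s}\,d\vec y\bigr)^{1/s}\leq Cr^{mn/s}$ via homogeneity, and the exponent bookkeeping $-(mn-\alpha)+mn/s=-(mn-\alpha s')/s'$ before taking the supremum in $r$. The only difference is that you spell out the polar-coordinates estimate for the $\Omega$-factor, which the paper leaves implicit.
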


\begin{proof}
By $s>1$, $\Omega \in L^{s}\left( S^{mn-1}\right) $ and the H\"{o}lder's
inequality, we have%
\begin{eqnarray*}
&&\frac{1}{r^{mn-\alpha }}\dint \limits_{\left \vert \overrightarrow{y}%
\right \vert <r}\left \vert \Omega \left( \overrightarrow{y}\right) \right
\vert \dprod \limits_{i=1}^{m}\left \vert f_{i}\left( x-y_{i}\right) \right
\vert d\overrightarrow{y} \\
&\leq &\frac{1}{r^{mn-\alpha }}\left( \dint \limits_{\left \vert 
\overrightarrow{y}\right \vert <r}\dprod \limits_{i=1}^{m}\left \vert
f_{i}\left( x-y_{i}\right) \right \vert ^{s^{\prime }}d\overrightarrow{y}%
\right) ^{\frac{1}{s^{\prime }}}\left( \dint \limits_{\left \vert 
\overrightarrow{y}\right \vert <r}\left \vert \Omega \left( \overrightarrow{y%
}\right) \right \vert ^{s}d\overrightarrow{y}\right) ^{\frac{1}{s}} \\
&\leq &C\sup_{r>0}\frac{1}{r^{mn\left( 1-\frac{1}{s}\right) -\alpha }}\left(
\dint \limits_{\left \vert \overrightarrow{y}\right \vert <r}\dprod
\limits_{i=1}^{m}\left \vert f_{i}\left( x-y_{i}\right) \right \vert
^{s^{\prime }}d\overrightarrow{y}\right) ^{\frac{1}{s^{\prime }}} \\
&\leq &C\sup_{r>0}\left( \frac{1}{r^{mn-\alpha s^{\prime }}}\dint
\limits_{\left \vert \overrightarrow{y}\right \vert <r}\dprod
\limits_{i=1}^{m}\left \vert f_{i}\left( x-y_{i}\right) \right \vert
^{s^{\prime }}d\overrightarrow{y}\right) ^{\frac{1}{s^{\prime }}} \\
&\leq &C\left( \sup_{r>0}\frac{1}{r^{mn-\alpha s^{\prime }}}\dint
\limits_{\left \vert \overrightarrow{y}\right \vert <r}\dprod
\limits_{i=1}^{m}\left \vert f_{i}\left( x-y_{i}\right) \right \vert
^{s^{\prime }}d\overrightarrow{y}\right) ^{\frac{1}{s^{\prime }}} \\
&\leq &C\left[ M_{\alpha s^{\prime }}^{\left( m\right) }\left( \left \vert
f_{1}\right \vert ^{s^{\prime }},\left \vert f_{2}\right \vert ^{s^{\prime
}},\ldots ,\left \vert f_{m}\right \vert ^{s^{\prime }}\right) \left(
x\right) \right] ^{\frac{1}{s^{\prime }}}.
\end{eqnarray*}%
This completes the proof of the Lemma \ref{lemma1*}.
\end{proof}

The following Lemma \ref{lemma2*} plays a key role in the manuscript and the
idea of the proof of Lemma \ref{lemma2*} belongs to Welland \cite{Welland}.

\begin{lemma}
\label{lemma2*}$\left( \text{\textbf{Welland's inequality for }}I_{\Omega
,\alpha }^{\left( m\right) }\text{ \textbf{and} }M_{\Omega ,\alpha }^{\left(
m\right) }\right) $Let $0<\alpha <mn$, and let $f_{i}\in L^{p_{i}}\left( {%
\mathbb{R}^{n}}\right) $ with $1\leq p_{i}\leq \infty \left( i=1,2,\ldots
,m\right) $. For any $0<\epsilon <\min \left \{ \alpha ,mn-\alpha \right \} $%
, there exists a constant $C>0$ such that for any $x\in {\mathbb{R}^{n}}$,%
\begin{equation}
\left \vert I_{\Omega ,\alpha }^{\left( m\right) }\left( \overrightarrow{f}%
\right) \left( x\right) \right \vert \leq C\left[ M_{\Omega ,\alpha
+\epsilon }^{\left( m\right) }\left( \overrightarrow{f}\right) \left(
x\right) \right] ^{\frac{1}{2}}\left[ M_{\Omega ,\alpha -\epsilon }^{\left(
m\right) }\left( \overrightarrow{f}\right) \left( x\right) \right] ^{\frac{1%
}{2}}.  \label{2*}
\end{equation}
\end{lemma}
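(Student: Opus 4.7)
My plan is to follow the classical Welland splitting/optimization argument, adapted to the multilinear setting with rough kernel. The constraint $0<\epsilon<\min\{\alpha,mn-\alpha\}$ is exactly what guarantees that both $\alpha-\epsilon>0$ and $mn-(\alpha+\epsilon)>0$, so that the two fractional maximal operators on the right-hand side are well-defined and non-degenerate.

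For a parameter $\delta>0$ to be chosen later, I would split
\begin{equation*}
\left|I_{\Omega,\alpha}^{(m)}(\overrightarrow{f})(x)\right|
\le \int_{|\overrightarrow{y}|\le \delta}\frac{|\Omega(\overrightarrow{y})|}{|\overrightarrow{y}|^{mn-\alpha}}\prod_{i=1}^{m}|f_i(x-y_i)|\,d\overrightarrow{y}
+ \int_{|\overrightarrow{y}|>\delta}\frac{|\Omega(\overrightarrow{y})|}{|\overrightarrow{y}|^{mn-\alpha}}\prod_{i=1}^{m}|f_i(x-y_i)|\,d\overrightarrow{y}
=: J_1+J_2.
\end{equation*}
Each part is treated by a dyadic annular decomposition. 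For $J_1$, I would use $A_k=\{2^{-k-1}\delta<|\overrightarrow{y}|\le 2^{-k}\delta\}$, $k\ge 0$, bound $|\overrightarrow{y}|^{-(mn-\alpha)}$ by its value at the inner radius, and enlarge the integration region to $\{|\overrightarrow{y}|\le 2^{-k}\delta\}$. Recognizing the resulting expression as the $k$-th term in the supremum defining $M_{\Omega,\alpha-\epsilon}^{(m)}(\overrightarrow{f})(x)$, with a left-over geometric factor of $(2^{-k}\delta)^{\epsilon}$, the sum over $k\ge 0$ gives
\begin{equation*}
J_1 \le C\,\delta^{\epsilon}\,M_{\Omega,\alpha-\epsilon}^{(m)}(\overrightarrow{f})(x).
\end{equation*}

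Symmetrically, for $J_2$ I would use annuli $A_k'=\{2^{k}\delta<|\overrightarrow{y}|\le 2^{k+1}\delta\}$, $k\ge 0$, again estimating $|\overrightarrow{y}|^{-(mn-\alpha)}$ at the inner radius and enlarging to $\{|\overrightarrow{y}|\le 2^{k+1}\delta\}$. This time the remaining factor is $(2^{k}\delta)^{-\epsilon}$ (here one needs $\epsilon<mn-\alpha$ to make the geometric series converge), yielding
\begin{equation*}
J_2 \le C\,\delta^{-\epsilon}\,M_{\Omega,\alpha+\epsilon}^{(m)}(\overrightarrow{f})(x).
\end{equation*}
Adding the two estimates and optimizing in $\delta$ by choosing $\delta^{2\epsilon}=M_{\Omega,\alpha+\epsilon}^{(m)}(\overrightarrow{f})(x)/M_{\Omega,\alpha-\epsilon}^{(m)}(\overrightarrow{f})(x)$ equates the two contributions and produces the claimed geometric mean \eqref{2*}. (If one of the maximal functions vanishes or is infinite at $x$, the inequality is trivial or vacuous, so this can be safely assumed positive and finite.)

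The main obstacle is purely bookkeeping: making sure that the multilinear enlargement step (passing from the annulus $A_k$ to the ball $\{|\overrightarrow{y}|\le 2^{-k}\delta\}$) is valid with $|\Omega|$ inside the integrand, and that the normalization exponent $mn-(\alpha\mp\epsilon)$ matches the one in the definitions of $M_{\Omega,\alpha\pm\epsilon}^{(m)}$ after factoring out the correct power of $2^{-k}\delta$ (resp.\ $2^{k}\delta$). No cancellation or smoothness of $\Omega$ is used, only its appearance as an absolutely integrated weight, which is why the argument goes through for rough $\Omega\in L^{s}(S^{mn-1})$ without any further hypothesis beyond the conditions already in force.
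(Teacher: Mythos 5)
Your proposal is correct and follows essentially the same argument as the paper: the split at radius $\delta$, the dyadic annular decomposition of each piece yielding $J_1\le C\delta^{\epsilon}M_{\Omega,\alpha-\epsilon}^{(m)}(\overrightarrow{f})(x)$ and $J_2\le C\delta^{-\epsilon}M_{\Omega,\alpha+\epsilon}^{(m)}(\overrightarrow{f})(x)$, and the optimization in $\delta$ equating the two terms. (One small remark: the restriction $\epsilon<mn-\alpha$ is needed so that $M_{\Omega,\alpha+\epsilon}^{(m)}$ is a genuine fractional maximal operator, not for the convergence of the geometric series $\sum_{k}2^{-k\epsilon}$, which holds for every $\epsilon>0$; this does not affect the validity of your argument.)
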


\begin{proof}
Fix $x\in {\mathbb{R}^{n}}$ and $0<\epsilon <\min \left \{ \alpha ,mn-\alpha
\right \} $, for any $\delta >0$ we decompose as follow,%
\begin{eqnarray*}
\left \vert I_{\Omega ,\alpha }^{\left( m\right) }\left( \overrightarrow{f}%
\right) \left( x\right) \right \vert &\leq &\dint \limits_{\left( {\mathbb{R}%
^{n}}\right) ^{m}}\frac{\left \vert \Omega \left( \overrightarrow{y}\right)
\right \vert }{\left \vert \overrightarrow{y}\right \vert ^{mn-\alpha }}%
\dprod \limits_{i=1}^{m}\left \vert f_{i}\left( x-y_{i}\right) \right \vert d%
\overrightarrow{y} \\
&\leq &\dint \limits_{\left \vert \overrightarrow{y}\right \vert <\delta }%
\frac{\left \vert \Omega \left( \overrightarrow{y}\right) \right \vert }{%
\left \vert \overrightarrow{y}\right \vert ^{mn-\alpha }}\dprod
\limits_{i=1}^{m}\left \vert f_{i}\left( x-y_{i}\right) \right \vert d%
\overrightarrow{y}+\dint \limits_{\left \vert \overrightarrow{y}\right \vert
>\delta }\frac{\left \vert \Omega \left( \overrightarrow{y}\right) \right
\vert }{\left \vert \overrightarrow{y}\right \vert ^{mn-\alpha }}\dprod
\limits_{i=1}^{m}\left \vert f_{i}\left( x-y_{i}\right) \right \vert d%
\overrightarrow{y} \\
&=&:J_{1}+J_{2}.
\end{eqnarray*}

Now, we will estimate $J_{1}$ and $J_{2}$, respectively. For $J_{1}$,%
\begin{eqnarray*}
J_{1} &=&\dsum \limits_{j=0}^{\infty }\dint \limits_{\overrightarrow{y}\in
B\left( 2^{-j}\delta \right) \diagdown B\left( 2^{-j-1}\delta \right) }\frac{%
\left \vert \Omega \left( \overrightarrow{y}\right) \right \vert }{\left
\vert \overrightarrow{y}\right \vert ^{mn-\alpha }}\dprod
\limits_{i=1}^{m}\left \vert f_{i}\left( x-y_{i}\right) \right \vert d%
\overrightarrow{y} \\
&\leq &\dsum \limits_{j=0}^{\infty }\frac{1}{\left( 2^{-j-1}\delta \right)
^{mn-\alpha }}\dint \limits_{\overrightarrow{y}\in B\left( 2^{-j}\delta
\right) \diagdown B\left( 2^{-j-1}\delta \right) }\left \vert \Omega \left( 
\overrightarrow{y}\right) \right \vert \dprod \limits_{i=1}^{m}\left \vert
f_{i}\left( x-y_{i}\right) \right \vert d\overrightarrow{y} \\
&\leq &C\dsum \limits_{j=0}^{\infty }\frac{1}{\left( 2^{-j}\delta \right)
^{mn-\alpha }}\dint \limits_{\overrightarrow{y}\in B\left( 2^{-j}\delta
\right) }\left \vert \Omega \left( \overrightarrow{y}\right) \right \vert
\dprod \limits_{i=1}^{m}\left \vert f_{i}\left( x-y_{i}\right) \right \vert d%
\overrightarrow{y} \\
&\leq &C\dsum \limits_{j=0}^{\infty }\frac{\left( 2^{-j}\delta \right)
^{\epsilon }}{\left( 2^{-j}\delta \right) ^{mn-\alpha +\epsilon }}\dint
\limits_{\overrightarrow{y}\in B\left( 2^{-j}\delta \right) }\left \vert
\Omega \left( \overrightarrow{y}\right) \right \vert \dprod
\limits_{i=1}^{m}\left \vert f_{i}\left( x-y_{i}\right) \right \vert d%
\overrightarrow{y} \\
&\leq &C\delta ^{\epsilon }\dsum \limits_{j=0}^{\infty }\left( 2^{-j\epsilon
}\right) M_{\Omega ,\alpha -\epsilon }^{\left( m\right) }\left( 
\overrightarrow{f}\right) \left( x\right) \\
&\leq &C\delta ^{\epsilon }M_{\Omega ,\alpha -\epsilon }^{\left( m\right)
}\left( \overrightarrow{f}\right) \left( x\right) .
\end{eqnarray*}

As for $J_{2}$, we have%
\begin{eqnarray*}
J_{2} &=&\dsum \limits_{j=0}^{\infty }\dint \limits_{\overrightarrow{y}\in
B\left( 2^{j+1}\delta \right) \diagdown B\left( 2^{j}\delta \right) }\frac{%
\left \vert \Omega \left( \overrightarrow{y}\right) \right \vert }{\left
\vert \overrightarrow{y}\right \vert ^{mn-\alpha }}\dprod
\limits_{i=1}^{m}\left \vert f_{i}\left( x-y_{i}\right) \right \vert d%
\overrightarrow{y} \\
&\leq &\dsum \limits_{j=0}^{\infty }\frac{1}{\left( 2^{j}\delta \right)
^{mn-\alpha }}\dint \limits_{\overrightarrow{y}\in B\left( 2^{j+1}\delta
\right) \diagdown B\left( 2^{j}\delta \right) }\left \vert \Omega \left( 
\overrightarrow{y}\right) \right \vert \dprod \limits_{i=1}^{m}\left \vert
f_{i}\left( x-y_{i}\right) \right \vert d\overrightarrow{y} \\
&\leq &C\dsum \limits_{j=0}^{\infty }\frac{1}{\left( 2^{j}\delta \right)
^{mn-\alpha }}\dint \limits_{\overrightarrow{y}\in B\left( 2^{j+1}\delta
\right) }\left \vert \Omega \left( \overrightarrow{y}\right) \right \vert
\dprod \limits_{i=1}^{m}\left \vert f_{i}\left( x-y_{i}\right) \right \vert d%
\overrightarrow{y} \\
&\leq &C\dsum \limits_{j=0}^{\infty }\frac{\left( 2^{j}\delta \right)
^{\epsilon }}{\left( 2^{j}\delta \right) ^{mn-\alpha -\epsilon }}\dint
\limits_{\overrightarrow{y}\in B\left( 2^{j+1}\delta \right) }\left \vert
\Omega \left( \overrightarrow{y}\right) \right \vert \dprod
\limits_{i=1}^{m}\left \vert f_{i}\left( x-y_{i}\right) \right \vert d%
\overrightarrow{y} \\
&\leq &C\delta ^{-\epsilon }\dsum \limits_{j=0}^{\infty }\left(
2^{-j\epsilon }\right) M_{\Omega ,\alpha +\epsilon }^{\left( m\right)
}\left( \overrightarrow{f}\right) \left( x\right) \\
&\leq &C\delta ^{-\epsilon }M_{\Omega ,\alpha +\epsilon }^{\left( m\right)
}\left( \overrightarrow{f}\right) \left( x\right) .
\end{eqnarray*}

Thus we get%
\begin{equation*}
\left \vert I_{\Omega ,\alpha }^{\left( m\right) }\left( \overrightarrow{f}%
\right) \left( x\right) \right \vert \leq C\delta ^{\epsilon }M_{\Omega
,\alpha -\epsilon }^{\left( m\right) }\left( \overrightarrow{f}\right)
\left( x\right) +C\delta ^{-\epsilon }M_{\Omega ,\alpha +\epsilon }^{\left(
m\right) }\left( \overrightarrow{f}\right) \left( x\right) .
\end{equation*}

Now we take $\delta >0$ such that%
\begin{equation*}
\delta ^{\epsilon }M_{\Omega ,\alpha -\epsilon }^{\left( m\right) }\left( 
\overrightarrow{f}\right) \left( x\right) =\delta ^{-\epsilon }M_{\Omega
,\alpha +\epsilon }^{\left( m\right) }\left( \overrightarrow{f}\right)
\left( x\right) .
\end{equation*}

This implies the Lemma \ref{lemma2*}.
\end{proof}

We now can prove the $L^{p}$ boundedness for operators $M_{\alpha }^{\left(
m\right) }$.

\begin{theorem}
\label{teo3*}Let $0<\alpha <mn$, and let $f_{i}\in L^{p_{i}}\left( {\mathbb{R%
}^{n}}\right) $ with $1\leq p_{i}\leq \infty \left( i=1,2,\ldots ,m\right) $%
, and let $\frac{1}{p}=\frac{1}{p_{1}}+\frac{1}{p_{2}}+\cdots +\frac{1}{p_{m}%
}-\frac{\alpha }{n}>0$.

$\left( i\right) $ If each $p_{i}>1$, then there exists a constant $C>0$
such that%
\begin{equation}
\left \Vert M_{\alpha }^{\left( m\right) }\left( \overrightarrow{f}\right)
\right \Vert _{L^{p}\left( {\mathbb{R}^{n}}\right) }\leq C\dprod
\limits_{i=1}^{m}\left \Vert f_{i}\right \Vert _{L^{p_{i}}\left( {\mathbb{R}%
^{n}}\right) }.  \label{3*}
\end{equation}

$\left( ii\right) $ If $p_{i}=1$ for some $i$, then there exists a constant $%
C>0$ such that%
\begin{equation}
\left \Vert M_{\alpha }^{\left( m\right) }\left( \overrightarrow{f}\right)
\right \Vert _{L^{p,\infty }\left( {\mathbb{R}^{n}}\right) }\leq C\dprod
\limits_{i=1}^{m}\left \Vert f_{i}\right \Vert _{L^{p_{i}}\left( {\mathbb{R}%
^{n}}\right) }.  \label{4*}
\end{equation}
\end{theorem}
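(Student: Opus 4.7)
The plan is to dominate $M_\alpha^{(m)}(\overrightarrow{f})$ pointwise by a product of classical fractional maximal functions of the individual $f_i$'s, and then combine this with H\"older's inequality and the Muckenhoupt-Wheeden theorem. The key geometric observation is that, since $|\overrightarrow{y}|=|y_1|+\cdots+|y_m|\ge|y_i|$, the ball $\{|\overrightarrow{y}|<r\}$ is contained in the product $\prod_{i=1}^m\{|y_i|<r\}$. Thus, for any nonnegative numbers $\alpha_1,\ldots,\alpha_m$ with $\sum_i\alpha_i=\alpha$, Tonelli's theorem yields
\[
\frac{1}{r^{mn-\alpha}}\int_{|\overrightarrow{y}|<r}\prod_i|f_i(x-y_i)|\,d\overrightarrow{y}\;\le\;\prod_{i=1}^m\frac{1}{r^{n-\alpha_i}}\int_{|y_i|<r}|f_i(x-y_i)|\,dy_i,
\]
and taking $\sup_{r>0}$ (using $\sup\prod\le\prod\sup$ for nonnegative quantities) gives the pointwise bound
\[
M_\alpha^{(m)}(\overrightarrow{f})(x)\;\le\;C\prod_{i=1}^m M_{\alpha_i}(f_i)(x),
\]
where $M_{\alpha_i}$ denotes the classical fractional maximal operator on $\mathbb{R}^n$.

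Next, I would choose $\alpha_i=\alpha/(p_i\sum_j 1/p_j)$, interpreting $1/\infty=0$ (so $\alpha_i=0$ whenever $p_i=\infty$). A direct calculation gives $\sum_i\alpha_i=\alpha$, and the assumption $1/p=\sum_i 1/p_i-\alpha/n>0$ is equivalent to $\alpha_i<n/p_i$ for every $i$. Setting $1/q_i=1/p_i-\alpha_i/n>0$, one checks $\sum_i 1/q_i=1/p$. For part (i), each $p_i>1$ and $\alpha_i<n/p_i$, so the classical Muckenhoupt-Wheeden theorem delivers $\|M_{\alpha_i}(f_i)\|_{L^{q_i}}\le C\|f_i\|_{L^{p_i}}$; combining this with the pointwise bound and H\"older's inequality (with exponent $1/p=\sum 1/q_i$) yields \eqref{3*}.

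For part (ii), any index $i$ with $p_i=1$ provides only a weak-type estimate $\|M_{\alpha_i}(f_i)\|_{L^{q_i,\infty}}\le C\|f_i\|_{L^1}$, and the conclusion \eqref{4*} then follows from the generalized H\"older inequality in Lorentz spaces, namely $\|\prod_i g_i\|_{L^{p,\infty}}\le C\prod_i\|g_i\|_{L^{q_i,\infty}}$ whenever $1/p=\sum_i 1/q_i$. The main obstacle I expect is the bookkeeping in the choice of splitting combined with handling several simultaneous weak-type factors via the Lorentz H\"older in part (ii); the pointwise reduction is essentially routine, and the heavy analytic lifting is carried by the classical single-variable fractional maximal theorem. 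An alternative finish for part (ii) is a multilinear Marcinkiewicz interpolation between the strong-type endpoints already supplied by part (i).
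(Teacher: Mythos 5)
Your argument is correct, but it takes a genuinely different route from the paper's. The paper's proof is a one-line domination: since $mn-\alpha>0$, on the region $\left\vert \overrightarrow{y}\right\vert <r$ one has $r^{-(mn-\alpha )}\leq \left\vert \overrightarrow{y}\right\vert ^{-(mn-\alpha )}$, hence $M_{\alpha }^{\left( m\right) }\left( \overrightarrow{f}\right) \left( x\right) \leq I_{\alpha }^{\left( m\right) }\left( \left\vert f_{1}\right\vert ,\ldots ,\left\vert f_{m}\right\vert \right) \left( x\right) $ pointwise, and both (\ref{3*}) and (\ref{4*}) are then quoted directly from Theorem 1 of \cite{Kenig} for $I_{\alpha }^{\left( m\right) }$. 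You instead factor the multi-sublinear maximal operator through one-variable fractional maximal operators, $M_{\alpha }^{\left( m\right) }\left( \overrightarrow{f}\right) \leq C\prod_{i}M_{\alpha _{i}}f_{i}$ with a splitting $\sum_{i}\alpha _{i}=\alpha $, $\alpha _{i}<n/p_{i}$, and finish with H\"{o}lder's inequality in Lebesgue (resp.\ weak-Lorentz) norms; your choice $\alpha _{i}=\alpha /(p_{i}\sum_{j}1/p_{j})$ does satisfy all the needed constraints, and the weak-type H\"{o}lder inequality closes part (ii). What your route buys is self-containedness: it rests only on the classical one-variable Muckenhoupt--Wheeden bounds and avoids the genuinely multilinear Kenig--Stein theorem, which is where all the real work is hidden in the paper's version; the price is that the trick is specific to the maximal operator (the kernel $\left\vert \overrightarrow{y}\right\vert ^{-(mn-\alpha )}$ of $I_{\alpha }^{\left( m\right) }$ does not factor, which is why the paper keeps $I_{\alpha }^{\left( m\right) }$ as the master object). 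One caveat: your suggested alternative ending for (ii) via multilinear Marcinkiewicz interpolation ``between the strong-type endpoints already supplied by part (i)'' does not work, since those strong bounds live strictly in the interior $p_{i}>1$ and interpolation cannot reach the endpoint $p_{i}=1$ from the interior; stick with the Lorentz-space H\"{o}lder argument.
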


\begin{proof}
Fix $x\in {\mathbb{R}^{n}}$, we have%
\begin{eqnarray*}
&&\frac{1}{r^{mn-\alpha }}\int \limits_{\left \vert \overrightarrow{y}\right
\vert <r}\dprod \limits_{i=1}^{m}\left \vert f_{i}\left( x-y_{i}\right)
\right \vert d\overrightarrow{y} \\
&\leq &\int \limits_{\left \vert \overrightarrow{y}\right \vert <r}\frac{1}{%
\left \vert \overrightarrow{y}\right \vert ^{mn-\alpha }}\dprod%
\limits_{i=1}^{m}\left \vert f_{i}\left( x-y_{i}\right) \right \vert d%
\overrightarrow{y} \\
&\leq &\dint \limits_{\left( {\mathbb{R}^{n}}\right) ^{m}}\frac{1}{\left
\vert \overrightarrow{y}\right \vert ^{mn-\alpha }}\dprod
\limits_{i=1}^{m}\left \vert f_{i}\left( x-y_{i}\right) \right \vert d%
\overrightarrow{y} \\
&=&I_{\alpha }^{\left( m\right) }\left( \left \vert f_{1}\right \vert ,\left
\vert f_{2}\right \vert ,\ldots ,\left \vert f_{m}\right \vert \right)
\left( x\right) .
\end{eqnarray*}

Taking the supremum for $r>0$ on both sides of the above inequality, we have%
\begin{equation*}
M_{\alpha }^{\left( m\right) }\left( \overrightarrow{f}\right) \left(
x\right) \leq I_{\alpha }^{\left( m\right) }\left( \left \vert f_{1}\right
\vert ,\left \vert f_{2}\right \vert ,\ldots ,\left \vert f_{m}\right \vert
\right) \left( x\right) .
\end{equation*}

Applying Theorem 1 in \cite{Kenig} and from the above inequality we
immediately obtain the inequalities (\ref{3*}) and (\ref{4*}).
\end{proof}

\textbf{The proof of Theorem \ref{teo1}. }

\begin{proof}
If each $s^{\prime }<p_{i}$, we can apply Lemma \ref{lemma1*} and Theorem %
\ref{teo3*} to get%
\begin{eqnarray*}
\left \Vert M_{\Omega ,\alpha }^{\left( m\right) }\left( \overrightarrow{f}%
\right) \right \Vert _{L^{p}\left( {\mathbb{R}^{n}}\right) } &=&\left( \dint
\limits_{{\mathbb{R}^{n}}}\left \vert M_{\Omega ,\alpha }^{\left( m\right)
}\left( \overrightarrow{f}\right) \left( x\right) \right \vert ^{p}dx\right)
^{\frac{1}{p}} \\
&\leq &C\left( \dint \limits_{{\mathbb{R}^{n}}}\left \vert M_{\alpha
s^{\prime }}^{\left( m\right) }\left( \left \vert f_{1}\right \vert
^{s^{\prime }},\left \vert f_{2}\right \vert ^{s^{\prime }},\ldots ,\left
\vert f_{m}\right \vert ^{s^{\prime }}\right) \left( x\right) \right \vert ^{%
\frac{p}{s^{\prime }}}dx\right) ^{\frac{1}{p}} \\
&\leq &C\dprod \limits_{i=1}^{m}\left \Vert \left \vert f_{i}\right \vert
^{s^{\prime }}\right \Vert _{L^{\frac{p_{i}}{s^{\prime }}}\left( {\mathbb{R}%
^{n}}\right) }^{\frac{1}{s^{\prime }}}\leq C\dprod \limits_{i=1}^{m}\left
\Vert f_{i}\right \Vert _{L^{p_{i}}\left( {\mathbb{R}^{n}}\right) }.
\end{eqnarray*}%
If $p_{i}=s^{\prime }$ for some $i$, we also applying Lemma \ref{lemma1*}
and Theorem \ref{teo3*} to get for any $\lambda >0$ that%
\begin{eqnarray*}
&&\left \vert \left \{ x:\left \vert M_{\Omega ,\alpha }^{\left( m\right)
}\left( \overrightarrow{f}\right) \left( x\right) \right \vert >\lambda
\right \} \right \vert \\
&\leq &C\left \vert \left \{ x:\left \vert \left[ M_{\alpha s^{\prime
}}^{\left( m\right) }\left( \left \vert f_{1}\right \vert ^{s^{\prime
}},\left \vert f_{2}\right \vert ^{s^{\prime }},\ldots ,\left \vert
f_{m}\right \vert ^{s^{\prime }}\right) \left( x\right) \right] \right \vert
^{\frac{1}{s^{\prime }}}>\lambda \right \} \right \vert \\
&\leq &C\left \vert \left \{ x:\left \vert \left[ M_{\alpha s^{\prime
}}^{\left( m\right) }\left( \left \vert f_{1}\right \vert ^{s^{\prime
}},\left \vert f_{2}\right \vert ^{s^{\prime }},\ldots ,\left \vert
f_{m}\right \vert ^{s^{\prime }}\right) \left( x\right) \right] \right \vert
>\lambda ^{s^{\prime }}\right \} \right \vert \\
&\leq &C\left( \frac{1}{\lambda ^{s^{\prime }}}\dprod \limits_{i=1}^{m}\left
\Vert \left \vert f_{i}\right \vert ^{s^{\prime }}\right \Vert _{L^{\frac{%
p_{i}}{s^{\prime }}}\left( {\mathbb{R}^{n}}\right) }\right) ^{\frac{p}{%
s^{\prime }}}\leq C\left( \frac{1}{\lambda }\dprod \limits_{i=1}^{m}\left
\Vert f_{i}\right \Vert _{L^{p_{i}}\left( {\mathbb{R}^{n}}\right) }\right)
^{p}.
\end{eqnarray*}%
Thus, we complete the proof of Theorem \ref{teo1}.
\end{proof}

\textbf{The proof of Theorem \ref{teo2}. }

\begin{proof}
Take a small positive number $\epsilon $ with $0<\epsilon <\min \left \{
\alpha ,\frac{mn}{s^{\prime }}-\alpha ,\frac{n}{p}\right \} $. One can then
see from the conditions of Theorem \ref{teo2} that $1\leq s^{\prime }<\frac{%
mn}{\alpha +\epsilon }$ and $1\leq s^{\prime }<\frac{mn}{\alpha -\epsilon }$%
, and that%
\begin{equation}
\frac{1}{q_{1}}:=\frac{1}{p_{1}}+\frac{1}{p_{2}}+\cdots +\frac{1}{p_{m}}-%
\frac{\alpha +\epsilon }{n}=\frac{1}{p}-\frac{\epsilon }{n}>0,  \label{5*}
\end{equation}%
\begin{equation}
\frac{1}{q_{2}}:=\frac{1}{p_{1}}+\frac{1}{p_{2}}+\cdots +\frac{1}{p_{m}}-%
\frac{\alpha -\epsilon }{n}=\frac{1}{p}+\frac{\epsilon }{n}>0.  \label{6*}
\end{equation}%
Now if each $s^{\prime }<p_{i}$, then Theorem \ref{teo1} implies that%
\begin{equation*}
\left \Vert M_{\Omega ,\alpha +\epsilon }^{\left( m\right) }\left( 
\overrightarrow{f}\right) \right \Vert _{L^{q_{1}}\left( {\mathbb{R}^{n}}%
\right) }\leq C\dprod \limits_{i=1}^{m}\left \Vert f_{i}\right \Vert
_{L^{p_{i}}\left( {\mathbb{R}^{n}}\right) },
\end{equation*}%
\begin{equation*}
\left \Vert M_{\Omega ,\alpha -\epsilon }^{\left( m\right) }\left( 
\overrightarrow{f}\right) \right \Vert _{L^{q_{2}}\left( {\mathbb{R}^{n}}%
\right) }\leq C\dprod \limits_{i=1}^{m}\left \Vert f_{i}\right \Vert
_{L^{p_{i}}\left( {\mathbb{R}^{n}}\right) }.
\end{equation*}%
Note that $\frac{p}{2q_{1}}+\frac{p}{2q_{2}}=1$. Using Lemma \ref{lemma2*},
the H\"{o}lder's inequality and the above two inequalities, we obtain%
\begin{eqnarray*}
\left \Vert I_{\Omega ,\alpha }^{\left( m\right) }\left( \overrightarrow{f}%
\right) \right \Vert _{L^{p}\left( {\mathbb{R}^{n}}\right) } &=&\left( \dint
\limits_{{\mathbb{R}^{n}}}\left \vert I_{\Omega ,\alpha }^{\left( m\right)
}\left( \overrightarrow{f}\right) \left( x\right) \right \vert ^{p}dx\right)
^{\frac{1}{p}} \\
&\leq &C\left( \dint \limits_{{\mathbb{R}^{n}}}\left[ M_{\Omega ,\alpha
+\epsilon }^{\left( m\right) }\left( \overrightarrow{f}\right) \left(
x\right) \right] ^{\frac{p}{2}}\left[ M_{\Omega ,\alpha -\epsilon }^{\left(
m\right) }\left( \overrightarrow{f}\right) \left( x\right) \right] ^{\frac{p%
}{2}}dx\right) ^{\frac{1}{p}} \\
&=&C\left( \dint \limits_{{\mathbb{R}^{n}}}\left[ M_{\Omega ,\alpha
+\epsilon }^{\left( m\right) }\left( \overrightarrow{f}\right) \left(
x\right) \right] ^{q_{1}}dx\right) ^{\frac{1}{2q_{1}}}\left( \dint \limits_{{%
\mathbb{R}^{n}}}\left[ M_{\Omega ,\alpha -\epsilon }^{\left( m\right)
}\left( \overrightarrow{f}\right) \left( x\right) \right] ^{q_{2}}dx\right)
^{\frac{1}{2q_{2}}} \\
&=&\left \Vert M_{\Omega ,\alpha +\epsilon }^{\left( m\right) }\left( 
\overrightarrow{f}\right) \right \Vert _{L^{q_{1}}\left( {\mathbb{R}^{n}}%
\right) }^{\frac{1}{2}}\left \Vert M_{\Omega ,\alpha -\epsilon }^{\left(
m\right) }\left( \overrightarrow{f}\right) \right \Vert _{L^{q_{2}}\left( {%
\mathbb{R}^{n}}\right) }^{\frac{1}{2}} \\
&\leq &C\dprod \limits_{i=1}^{m}\left \Vert f_{i}\right \Vert
_{L^{p_{i}}\left( {\mathbb{R}^{n}}\right) }.
\end{eqnarray*}%
This is the desired inequality (\ref{3}) of Theorem \ref{teo2}.

Similarly, If $p_{i}=s^{\prime }$ for some $i$, then Theorem \ref{teo1}
implies that%
\begin{equation*}
\left \Vert M_{\Omega ,\alpha +\epsilon }^{\left( m\right) }\left( 
\overrightarrow{f}\right) \right \Vert _{L^{q_{1},\infty }\left( {\mathbb{R}%
^{n}}\right) }\leq C\dprod \limits_{i=1}^{m}\left \Vert f_{i}\right \Vert
_{L^{p_{i}}\left( {\mathbb{R}^{n}}\right) },
\end{equation*}%
\begin{equation*}
\left \Vert M_{\Omega ,\alpha -\epsilon }^{\left( m\right) }\left( 
\overrightarrow{f}\right) \right \Vert _{L^{q_{2},\infty }\left( {\mathbb{R}%
^{n}}\right) }\leq C\dprod \limits_{i=1}^{m}\left \Vert f_{i}\right \Vert
_{L^{p_{i}}\left( {\mathbb{R}^{n}}\right) }.
\end{equation*}%
For any $\lambda >0$, we take $w^{2}=\lambda ^{\frac{2q_{1}}{q_{1}+q_{2}}%
}\left( \dprod \limits_{i=1}^{m}\left \Vert f_{i}\right \Vert
_{L^{p_{i}}}\right) ^{\frac{q_{2}-q_{1}}{q_{2}+q_{1}}}$, then Lemma \ref%
{lemma2*} and the above two inequalities give that%
\begin{eqnarray*}
&&\left \vert \left \{ x:\left \vert I_{\Omega ,\alpha }^{\left( m\right)
}\left( \overrightarrow{f}\right) \left( x\right) \right \vert >\lambda
\right \} \right \vert \\
&\leq &\left \vert \left \{ x:\left \vert M_{\Omega ,\alpha -\epsilon
}^{\left( m\right) }\left( \overrightarrow{f}\right) \left( x\right) \right
\vert ^{\frac{1}{2}}>\frac{w}{C}\right \} \right \vert +\left \vert \left \{
x:\left \vert M_{\Omega ,\alpha +\epsilon }^{\left( m\right) }\left( 
\overrightarrow{f}\right) \left( x\right) \right \vert ^{\frac{1}{2}}>\frac{%
\lambda }{w}\right \} \right \vert \\
&\leq &\left \vert \left \{ x:\left \vert M_{\Omega ,\alpha -\epsilon
}^{\left( m\right) }\left( \overrightarrow{f}\right) \left( x\right) \right
\vert ^{\frac{1}{2}}>\frac{w^{2}}{C^{2}}\right \} \right \vert +\left \vert
\left \{ x:\left \vert M_{\Omega ,\alpha +\epsilon }^{\left( m\right)
}\left( \overrightarrow{f}\right) \left( x\right) \right \vert ^{\frac{1}{2}%
}>\frac{\lambda ^{2}}{w^{2}}\right \} \right \vert \\
&\leq &C\left( \frac{1}{w^{2}}\dprod \limits_{i=1}^{m}\left \Vert
f_{i}\right \Vert _{L^{p_{i}}\left( {\mathbb{R}^{n}}\right) }\right)
^{q_{2}}+C\left( \frac{w^{2}}{\lambda ^{2}}\dprod \limits_{i=1}^{m}\left
\Vert f_{i}\right \Vert _{L^{p_{i}}\left( {\mathbb{R}^{n}}\right) }\right)
^{q_{1}} \\
&\leq &C\left( \frac{1}{\lambda }\dprod \limits_{i=1}^{m}\left \Vert
f_{i}\right \Vert _{L^{p_{i}}\left( {\mathbb{R}^{n}}\right) }\right) ^{p}.
\end{eqnarray*}%
This yields the desired inequality (\ref{4}) of Theorem \ref{teo2}. Thus we
complete the proof of Theorem \ref{teo2}.
\end{proof}

\section{Product weighted $L^{p}$-estimates for $I_{\Omega ,\protect \alpha %
}^{\left( m\right) }$ and $M_{\Omega ,\protect \alpha }^{\left( m\right) }$}

For the proofs of Theorems \ref{teo3} and \ref{teo4}, we need some lemmas.

\begin{lemma}
\label{lemma31}\cite{Ding} Suppose that $0<\alpha <n$, $1\leq s^{\prime }<p<%
\frac{n}{\alpha }$, $\frac{1}{q}=\frac{1}{p}-\frac{\alpha }{n}$ and $w\left(
x\right) ^{s^{\prime }}\in A_{\frac{p}{s^{\prime }},\frac{q}{s^{\prime }}}$.
Then there exists a small positive number $\epsilon $ with $0<\epsilon <\min
\left \{ \alpha ,\frac{n}{p}-\alpha ,\frac{n}{q^{\prime }}\right \} $ such
that $w\left( x\right) ^{s^{\prime }}\in A_{\frac{p}{s^{\prime }},\frac{%
q_{\epsilon }}{s^{\prime }}}$ and $w\left( x\right) ^{s^{\prime }}\in A_{%
\frac{p}{s^{\prime }},\frac{\tilde{q}_{\epsilon }}{s^{\prime }}}$, where $%
\frac{1}{\tilde{q}_{\epsilon }}=\frac{1}{p}-\frac{\alpha +\epsilon }{n}$ and 
$\frac{1}{\tilde{q}_{\epsilon }}=\frac{1}{p}-\frac{\alpha -\epsilon }{n}$.
\end{lemma}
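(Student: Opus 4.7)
The plan is to exploit the openness (self-improvement) property of the Muckenhoupt--Wheeden class. First I would rewrite the defining inequality of $w^{s'}\in A_{p/s',q/s'}$ explicitly. Since $(p/s')'=p/(p-s')$, the condition reads
\[
\sup_{Q}\left(\frac{1}{|Q|}\int_{Q}w^{q}\,dx\right)^{s'/q}\left(\frac{1}{|Q|}\int_{Q}w^{-s'p/(p-s')}\,dx\right)^{(p-s')/p}\leq C.
\]
The crucial observation is that the second factor depends only on $p$ and $s'$, not on $q$; hence when we want to verify the condition with $q$ replaced by $q_{\epsilon}$ or $\tilde q_{\epsilon}$ (same $p$), only the first factor changes, and the task reduces to comparing $(|Q|^{-1}\int_{Q}w^{q_{\epsilon}})^{s'/q_{\epsilon}}$ (respectively with $\tilde q_{\epsilon}$) with $(|Q|^{-1}\int_{Q}w^{q})^{s'/q}$.

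For $\tilde q_{\epsilon}<q$ (the case corresponding to $\alpha\mapsto\alpha-\epsilon$), the required estimate is immediate from the monotonicity of $L^{r}$-averages: Jensen's inequality applied to the probability measure $|Q|^{-1}\,dx$ yields
\[
\left(\frac{1}{|Q|}\int_{Q}w^{\tilde q_{\epsilon}}\,dx\right)^{1/\tilde q_{\epsilon}}\leq \left(\frac{1}{|Q|}\int_{Q}w^{q}\,dx\right)^{1/q},
\]
and raising to the $s'$-th power completes this half.

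The main step is the case $q_{\epsilon}>q$. Here I would invoke the Muckenhoupt--Wheeden characterization: $w^{s'}\in A_{p/s',q/s'}$ is equivalent to $w^{q}\in A_{r}$ with $r=1+q(p-s')/(ps')$, placing $w^{q}$ in a standard Muckenhoupt class. By the classical reverse H\"{o}lder inequality for $A_{r}$ weights, there exists $\delta>0$, depending only on the $A_{r}$-constant of $w^{q}$, such that uniformly in $Q$,
\[
\left(\frac{1}{|Q|}\int_{Q}w^{q(1+\delta)}\,dx\right)^{1/(1+\delta)}\leq C\,\frac{1}{|Q|}\int_{Q}w^{q}\,dx.
\]
The constraints $0<\epsilon<\min\{\alpha,n/p-\alpha,n/q'\}$ ensure that $q_{\epsilon}$ and $\tilde q_{\epsilon}$ are positive and lie in the admissible range, and for $\epsilon$ sufficiently small also guarantee $q<q_{\epsilon}\leq q(1+\delta)$. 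Interpolating between the exponents $q$ and $q(1+\delta)$ by Jensen's inequality, combined with the reverse H\"{o}lder bound above, yields $(|Q|^{-1}\int_{Q}w^{q_{\epsilon}})^{s'/q_{\epsilon}}\leq C(|Q|^{-1}\int_{Q}w^{q})^{s'/q}$.

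The main obstacle is the quantitative control of $\delta$: one has to verify that the reverse H\"{o}lder exponent coming from the $A_{r}$-membership of $w^{q}$ is large enough to absorb the perturbation $\epsilon/n$, which is precisely the technical content of the open-endedness property. All other ingredients (rewriting the $A_{p,q}$ condition, applying Jensen, verifying the admissible range of $\epsilon$) are routine.
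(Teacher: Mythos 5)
Your argument is correct and is the standard open-endedness proof: rewrite the $A_{p/s',q/s'}$ condition so that only the first factor depends on $q$, handle $\tilde q_{\epsilon}<q$ by monotonicity of averages, and handle $q_{\epsilon}>q$ via the equivalence $w^{s'}\in A_{p/s',q/s'}\Leftrightarrow w^{q}\in A_{1+q(p-s')/(ps')}$ together with the reverse H\"{o}lder inequality, choosing $\epsilon$ small enough that $q_{\epsilon}\le q(1+\delta)$. The paper itself gives no proof of this lemma, importing it verbatim from \cite{Ding}, and your reconstruction matches the argument of that source; the only cosmetic remark is that your ``interpolation'' step is again just monotonicity of $L^{r}$-averages applied between the exponents $q_{\epsilon}$ and $q(1+\delta)$.
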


\begin{lemma}
\label{lemma34}\cite{Shi} Let $0<\alpha <mn$, $1<p_{i}<\frac{mn}{\alpha }$, $%
\frac{1}{p}=\frac{1}{p_{1}}+\frac{1}{p_{2}}+\cdots +\frac{1}{p_{m}}-\frac{%
\alpha }{n}>0$, $\frac{1}{q_{i}}=\frac{1}{p_{i}}-\frac{\alpha }{mn}$, $%
i=1,\ldots ,m$. Assume that $f_{i}\in L_{w^{p_{i}}}^{p_{i}}\left( {\mathbb{R}%
^{n}}\right) $ with weight $w\left( x\right) \in \dbigcap
\limits_{i=1}^{m}A_{p_{i},q_{i}}$, then 
\begin{equation*}
\left \Vert M_{\alpha }^{\left( m\right) }\left( \overrightarrow{f}\right)
\right \Vert _{L_{w^{p}}^{p}\left( {\mathbb{R}^{n}}\right) }\leq C\dprod
\limits_{i=1}^{m}\left \Vert f_{i}\right \Vert _{L_{w^{p_{i}}}^{p_{i}}\left( 
{\mathbb{R}^{n}}\right) },
\end{equation*}%
with the absolute constant $C$.
\end{lemma}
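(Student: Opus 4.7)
The plan is to reduce the multilinear weighted estimate to the classical linear Muckenhoupt–Wheeden theorem via a pointwise majorization and a Hölder-type aggregation. First I would establish the pointwise bound
\[
M_\alpha^{(m)}(\overrightarrow{f})(x) \leq C \prod_{i=1}^m M_{\alpha/m}(f_i)(x),
\]
using that $|\overrightarrow{y}| = \sum_i |y_i| < r$ forces each $|y_i| < r$, so $\{|\overrightarrow{y}|<r\} \subseteq \prod_i B(0,r)$, and that $r^{mn-\alpha}$ factors as $\prod_i r^{n-\alpha/m}$, allowing the supremum to pass inside the product.

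Next, since $\frac{1}{q_i} = \frac{1}{p_i} - \frac{\alpha}{mn}$ and $w \in A_{p_i,q_i}$, this is precisely the Muckenhoupt–Wheeden setup for $M_{\alpha/m}$, so
\[
\|M_{\alpha/m}(f_i)\|_{L^{q_i}(w^{q_i})} \leq C\,\|f_i\|_{L^{p_i}(w^{p_i})}
\]
for each $i$. Combining these with the pointwise bound and invoking $\frac{1}{p} = \sum_i \frac{1}{q_i}$ (which follows from the hypothesis relating $p$, $p_i$, and $\alpha$), I would then assemble the $m$ linear estimates into the desired product bound via a Hölder-type inequality in weighted Lebesgue spaces, writing $\|g_i\|_{L^{q_i}(w^{q_i})} = \|g_i w\|_{L^{q_i}}$ and distributing $w^p$ across the $m$ factors so that unweighted Hölder in the $L^{q_i}$ spaces reassembles them into the $L^p(w^p)$ norm on the left.

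The main obstacle is precisely this last aggregation step. The naive splitting $w = \prod_i w^{1/m}$ yields only
\[
\Bigl\|\prod_i g_i\Bigr\|_{L^p(w^{mp})} \leq \prod_i \|g_i\|_{L^{q_i}(w^{q_i})},
\]
which overshoots the stated target by a factor of $m$ in the weight exponent. Bridging this gap is the technical heart of the lemma: one uses that the collective hypothesis $w \in \bigcap_i A_{p_i,q_i}$ places $w$ inside $A_\infty$, so that a reverse Hölder inequality together with a carefully chosen non-symmetric decomposition of $w^p$ across the factors (informed by the multilinear $A_{\overrightarrow{P},q}$ perspective) restores the correct matching between the Hölder exponents and the Muckenhoupt–Wheeden targets, delivering the desired $L^p(w^p)$ bound.
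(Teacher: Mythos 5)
Your reduction to the linear Muckenhoupt--Wheeden theorem founders exactly where you say it does, and the fix you gesture at is not available. The pointwise bound $M_{\alpha}^{(m)}(\overrightarrow{f})(x)\le C\prod_{i=1}^{m}M_{\alpha/m}(f_i)(x)$ is fine, and so is each linear estimate $\|M_{\alpha/m}(f_i)\,w\|_{L^{q_i}}\le C\|f_i w\|_{L^{p_i}}$ for $w\in A_{p_i,q_i}$ with $\tfrac{1}{q_i}=\tfrac{1}{p_i}-\tfrac{\alpha/m}{n}$. But any H\"older aggregation of $\int\prod_i (M_{\alpha/m}f_i)^{p}w^{p}\,dx=\int\prod_i\bigl[(M_{\alpha/m}f_i)\,w^{\theta_i}\bigr]^{p}dx$ with exponents $q_i/p$ forces $\sum_i\theta_i=1$, so each factor comes out as $\|M_{\alpha/m}(f_i)\,w^{\theta_i}\|_{L^{q_i}}$ with $\theta_i<1$; to invoke the linear theorem and land on $\|f_i w\|_{L^{p_i}}$ you would need $\theta_i=1$ for every $i$, which is impossible once $m\ge 2$. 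The discrepancy is a full factor of $w^{(m-1)p}$ in the weight, not an $\epsilon$ of room: reverse H\"older and the openness of the $A_{p,q}$ classes only permit small perturbations of exponents and cannot convert $\|f_i w^{1/m}\|_{L^{p_i}}$ into $\|f_i w\|_{L^{p_i}}$ (for a power weight the two quantities are genuinely incomparable, in either direction depending on where $f_i$ lives). So the step you defer as ``the technical heart'' is in fact an obstruction to the entire strategy: no decomposition of $w^{p}$ across the $m$ factors, symmetric or not, makes the linear theorem applicable with only the hypothesis $w\in\bigcap_i A_{p_i,q_i}$.

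For comparison: the paper itself offers no proof of this lemma --- it is imported verbatim from \cite{Shi} --- so there is nothing internal to measure your argument against, but the known proofs of such one-weight multilinear estimates do not pass through the linear weighted norm inequality at all. They work directly with the multilinear average on a cube, $|Q|^{\alpha/n-m}\int_{Q^{m}}\prod_i|f_i(x-y_i)|\,d\overrightarrow{y}=\prod_i|Q|^{\alpha/(mn)-1}\int_{Q}|f_i(x-y_i)|\,dy_i$, apply H\"older together with the $A_{p_i,q_i}$ condition on each cube factor, and then run a covering (or good-$\lambda$/distribution-function) argument for the weighted level sets. If you want a self-contained proof you should follow that multilinear route or simply cite \cite{Shi} as the paper does; the pointwise domination by $\prod_i M_{\alpha/m}f_i$ is strong enough for the unweighted estimates and for two-weight statements where the right-hand weight may differ from the left-hand one, but not for this one-weight inequality.
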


\textbf{The proof of Theorem \ref{teo3}. }

\begin{proof}
It's easy to see that $\frac{1}{\frac{p}{s^{\prime }}}=\frac{1}{\frac{p_{1}}{%
s^{\prime }}}+\frac{1}{\frac{p_{2}}{s^{\prime }}}+\cdots +\frac{1}{\frac{%
p_{m}}{s^{\prime }}}-\frac{\alpha s^{\prime }}{n}>0$, $\frac{1}{\frac{q_{i}}{%
s^{\prime }}}=\frac{1}{\frac{p_{i}}{s^{\prime }}}-\frac{\alpha s^{\prime }}{%
mn}$ and $1<\frac{p_{i}}{s^{\prime }}<\frac{mn}{\alpha s^{\prime }}$.
Therefore, by Lemma \ref{lemma1*} and Lemma \ref{lemma34} we get%
\begin{eqnarray*}
\left \Vert M_{\Omega ,\alpha }^{\left( m\right) }\left( \overrightarrow{f}%
\right) \right \Vert _{L_{w^{p}}^{p}\left( {\mathbb{R}^{n}}\right) }
&=&\left( \dint \limits_{{\mathbb{R}^{n}}}\left \vert M_{\Omega ,\alpha
}^{\left( m\right) }\left( \overrightarrow{f}\right) \left( x\right) w\left(
x\right) \right \vert ^{p}dx\right) ^{\frac{1}{p}} \\
&\leq &C\left( \dint \limits_{{\mathbb{R}^{n}}}\left \vert M_{\alpha
s^{\prime }}^{\left( m\right) }\left( \left \vert f_{1}\right \vert
^{s^{\prime }},\left \vert f_{2}\right \vert ^{s^{\prime }},\ldots ,\left
\vert f_{m}\right \vert ^{s^{\prime }}\right) \left( x\right) w\left(
x\right) ^{s^{\prime }}\right \vert ^{\frac{p}{s^{\prime }}}dx\right) ^{%
\frac{1}{p}} \\
&\leq &C\dprod \limits_{i=1}^{m}\left \Vert \left \vert f_{i}\right \vert
^{s^{\prime }}\right \Vert _{L_{w^{p_{i}}}^{\frac{p_{i}}{s^{\prime }}}\left( 
{\mathbb{R}^{n}}\right) }^{\frac{1}{s^{\prime }}}\leq C\dprod
\limits_{i=1}^{m}\left \Vert f_{i}\right \Vert _{L_{w^{p_{i}}}^{p_{i}}\left( 
{\mathbb{R}^{n}}\right) }.
\end{eqnarray*}%
This proves Theorem \ref{teo3}.
\end{proof}

\textbf{The proof of Theorem \ref{teo4}. }

\begin{proof}
Since $0<\alpha <mn$, $s^{\prime }<p_{i}<\frac{mn}{\alpha }$, $\frac{1}{q_{i}%
}=\frac{1}{p_{i}}-\frac{\alpha }{mn}$and $w\left( x\right) ^{s^{\prime }}\in
\dbigcap \limits_{i=1}^{n}A_{\frac{p}{s^{\prime }},\frac{q}{s^{\prime }}}$,
we get by Lemma \ref{lemma31} that there exists a small positive number $%
\epsilon $ such that $w\left( x\right) ^{s^{\prime }}\in \dbigcap
\limits_{i=1}^{n}A_{\frac{p_{i}}{s^{\prime }},\frac{\gamma _{i}}{s^{\prime }}%
}$ and $w\left( x\right) ^{s^{\prime }}\in \dbigcap \limits_{i=1}^{n}A_{%
\frac{p_{i}}{s^{\prime }},\frac{\xi _{i}}{s^{\prime }}}$, where $\frac{1}{%
\gamma _{i}}=\frac{1}{p_{i}}-\frac{\alpha +\epsilon }{mn}$, $\frac{1}{\xi
_{i}}=\frac{1}{p_{i}}-\frac{\alpha -\epsilon }{mn}$. Put%
\begin{equation*}
\frac{1}{\beta _{1}}:=\frac{1}{p_{1}}+\frac{1}{p_{2}}+\cdots +\frac{1}{p_{m}}%
-\frac{\alpha +\epsilon }{n}=\frac{1}{p}-\frac{\epsilon }{n}>0,
\end{equation*}%
\begin{equation*}
\frac{1}{\beta _{2}}:=\frac{1}{p_{1}}+\frac{1}{p_{2}}+\cdots +\frac{1}{p_{m}}%
-\frac{\alpha -\epsilon }{n}=\frac{1}{p}+\frac{\epsilon }{n}>0.
\end{equation*}%
Then, by Theorem \ref{teo3}, we get%
\begin{equation*}
\left \Vert M_{\Omega ,\alpha +\epsilon }^{\left( m\right) }\left( 
\overrightarrow{f}\right) \right \Vert _{L_{w^{\beta _{1}}}^{\beta
_{1}}\left( {\mathbb{R}^{n}}\right) }\leq C\dprod \limits_{i=1}^{m}\left
\Vert f_{i}\right \Vert _{L_{w^{p_{i}}}^{p_{i}}\left( {\mathbb{R}^{n}}%
\right) },
\end{equation*}%
\begin{equation*}
\left \Vert M_{\Omega ,\alpha -\epsilon }^{\left( m\right) }\left( 
\overrightarrow{f}\right) \right \Vert _{L_{w^{\beta _{2}}}^{\beta
_{2}}\left( {\mathbb{R}^{n}}\right) }\leq C\dprod \limits_{i=1}^{m}\left
\Vert f_{i}\right \Vert _{L_{w^{p_{i}}}^{p_{i}}\left( {\mathbb{R}^{n}}%
\right) }.
\end{equation*}%
Now by Lemma \ref{lemma2*}, the H\"{o}lder's inequality and the above two
inequalities we get%
\begin{eqnarray*}
\left \Vert I_{\Omega ,\alpha }^{\left( m\right) }\left( \overrightarrow{f}%
\right) \right \Vert _{L_{w^{p}}^{p}\left( {\mathbb{R}^{n}}\right) }
&=&\left( \dint \limits_{{\mathbb{R}^{n}}}\left \vert I_{\Omega ,\alpha
}^{\left( m\right) }\left( \overrightarrow{f}\right) \left( x\right) w\left(
x\right) \right \vert ^{p}dx\right) ^{\frac{1}{p}} \\
&\leq &C\left( \dint \limits_{{\mathbb{R}^{n}}}\left[ M_{\Omega ,\alpha
+\epsilon }^{\left( m\right) }\left( \overrightarrow{f}\right) \left(
x\right) w\left( x\right) \right] ^{\frac{p}{2}}\left[ M_{\Omega ,\alpha
-\epsilon }^{\left( m\right) }\left( \overrightarrow{f}\right) \left(
x\right) w\left( x\right) \right] ^{\frac{p}{2}}dx\right) ^{\frac{1}{p}} \\
&\leq &\left \Vert M_{\Omega ,\alpha +\epsilon }^{\left( m\right) }\left( 
\overrightarrow{f}\right) \right \Vert _{L_{w^{\beta _{1}}}^{\beta
_{1}}\left( {\mathbb{R}^{n}}\right) }^{\frac{1}{2}}\left \Vert M_{\Omega
,\alpha -\epsilon }^{\left( m\right) }\left( \overrightarrow{f}\right)
\right \Vert _{L_{w^{\beta _{2}}}^{\beta _{2}}\left( {\mathbb{R}^{n}}\right)
}^{\frac{1}{2}} \\
&\leq &C\dprod \limits_{i=1}^{m}\left \Vert f_{i}\right \Vert
_{L_{w^{p_{i}}}^{p_{i}}\left( {\mathbb{R}^{n}}\right) }.
\end{eqnarray*}%
This completes the proof of Theorem \ref{teo4}.
\end{proof}

\section{Two-weighted estimates for $I_{\Omega ,\protect \alpha }^{\left(
m\right) }$ and $M_{\Omega ,\protect \alpha }^{\left( m\right) }$}

For the proofs of Theorems \ref{teo5} and \ref{teo6}, we need following
lemma.

\begin{lemma}
\label{lemma42}\cite{Shi} Let $0<\alpha <mn$, $1<p_{i}<q_{i}<\infty $ for
each $i=1,2,\ldots ,m$. Let $\frac{1}{p}=\frac{1}{q_{1}}+\frac{1}{q_{2}}%
+\cdots +\frac{1}{q_{m}}$ and $\left( u,\upsilon \right) $ be a pair of
weights. If there exists $r_{i}>1$ such that, for every cube $Q$ in ${%
\mathbb{R}^{n}}$,%
\begin{equation*}
\left \vert Q\right \vert ^{\frac{1}{q_{i}}+\frac{\alpha }{mn}-\frac{1}{p_{i}%
}}\left( \frac{1}{\left \vert Q\right \vert }\dint \limits_{Q}u\left(
x\right) dx\right) ^{\frac{1}{q_{i}}}\left( \frac{1}{\left \vert Q\right
\vert }\dint \limits_{Q}\upsilon \left( x\right) ^{r_{i}\left(
1-p_{i}^{\prime }\right) }dx\right) ^{\frac{1}{r_{i}p_{i}^{\prime }}}\leq C,
\end{equation*}%
then for arbitrary $f_{i}\in L_{\upsilon }^{p_{i}}\left( {\mathbb{R}^{n}}%
\right) $, there is a constant $C>0$, independent of $f_{i}$, such that 
\begin{equation}
\left \Vert M_{\alpha }^{\left( m\right) }\left( \overrightarrow{f}\right)
\right \Vert _{L_{u}^{p}\left( {\mathbb{R}^{n}}\right) }\leq C\dprod
\limits_{i=1}^{m}\left \Vert f_{i}\right \Vert _{L_{\upsilon }^{p_{i}}\left( 
{\mathbb{R}^{n}}\right) }.  \label{41}
\end{equation}
\end{lemma}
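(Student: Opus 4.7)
The plan is to reduce the multilinear fractional maximal operator $M_{\alpha}^{(m)}$ to a product of $m$ classical single-variable fractional maximal operators $M_{\alpha/m}$, then combine the generalized H\"older inequality in the weighted $L^p_u$ space with the known single-variable two-weight theorem of P\'erez for fractional maximal operators.

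First, I would establish the pointwise bound
$$M_{\alpha}^{(m)}(\overrightarrow{f})(x) \leq \prod_{i=1}^{m} M_{\alpha/m}(f_i)(x).$$
Since $|\overrightarrow{y}|=|y_1|+\cdots+|y_m|$, the set $\{|\overrightarrow{y}|<r\}$ is contained in the Cartesian product $\{|y_1|<r\}\times\cdots\times\{|y_m|<r\}$, so
$$\int_{|\overrightarrow{y}|<r}\prod_{i=1}^{m}|f_i(x-y_i)|\,d\overrightarrow{y} \leq \prod_{i=1}^{m}\int_{|y_i|<r}|f_i(x-y_i)|\,dy_i.$$
Factoring $r^{mn-\alpha}=\prod_{i=1}^{m}r^{n-\alpha/m}$ and using the elementary inequality $\sup_{r>0}\prod_{i}a_i(r)\leq \prod_{i}\sup_{r>0}a_i(r)$ for nonnegative $a_i$ then yields the pointwise bound.

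Next, applying the generalized H\"older inequality with exponents $q_i/p$ (valid since $\tfrac{1}{p}=\sum_{i=1}^{m}\tfrac{1}{q_i}$) gives
$$\bigl\|M_{\alpha}^{(m)}(\overrightarrow{f})\bigr\|_{L^p_u(\mathbb{R}^n)} \leq \prod_{i=1}^{m}\bigl\|M_{\alpha/m}(f_i)\bigr\|_{L^{q_i}_u(\mathbb{R}^n)}.$$
Because $\tfrac{\alpha/m}{n}=\tfrac{\alpha}{mn}$, the testing hypothesis on $(u,\upsilon)$ assumed in the lemma is exactly the P\'erez-type sufficient condition for the strong $(L^{p_i}_\upsilon, L^{q_i}_u)$ boundedness of the single-variable operator $M_{\alpha/m}$, with $r_i>1$ playing its familiar role as a slight strengthening of the $A_{p_i,q_i}$-type condition. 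Applying this known single-variable result to each factor and taking the product yields (\ref{41}).

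The main obstacle is not the multilinear reduction itself, which is transparent, but the clean invocation of the correct single-variable two-weight theorem for the fractional maximal operator in precisely the stated form; one must confirm the classical result (due to P\'erez) that this testing condition, together with the $r_i>1$ improvement, suffices for the strong-type bound $M_{\alpha/m}\colon L^{p_i}_\upsilon\to L^{q_i}_u$. Once this reference is in place, the proof reduces to a pointwise domination followed by H\"older's inequality.
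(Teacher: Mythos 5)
The paper does not prove this statement at all: Lemma \ref{lemma42} is imported verbatim from \cite{Shi} and used as a black box, so there is no internal proof to compare against. Judged on its own, your argument is correct and is the standard (and almost certainly the cited source's) route: the containment $\{|\overrightarrow{y}|<r\}\subset\prod_{i}\{|y_i|<r\}$ plus the factorization $r^{mn-\alpha}=\prod_i r^{n-\alpha/m}$ gives the pointwise bound $M_{\alpha}^{(m)}(\overrightarrow{f})(x)\leq\prod_{i=1}^{m}M_{\alpha/m}(f_i)(x)$, the generalized H\"older inequality with $\sum_i p/q_i=1$ in $L^p_u$ splits the norm, and the hypothesis with the bump exponent $r_i>1$ is exactly P\'erez's sufficient condition for $M_{\alpha/m}\colon L^{p_i}_{\upsilon}\to L^{q_i}_{u}$ since $\tfrac{\alpha/m}{n}=\tfrac{\alpha}{mn}$. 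The only point to pin down carefully is the precise statement of that one-variable theorem (bump on $\upsilon^{1-p_i'}$ only, $p_i\leq q_i$, and the harmless discrepancy between the ball-centered and cube normalizations of $M_{\alpha/m}$), after which the proof is complete.
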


\textbf{The proof of Theorem \ref{teo5}. }

\begin{proof}
From the conditions of Theorem \ref{teo5}, one notes that $\left \vert
f_{i}\right \vert ^{s^{\prime }}\in L_{\upsilon }^{\frac{p_{i}}{s^{\prime }}%
}\left( {\mathbb{R}^{n}}\right) $, $\frac{1}{\frac{p}{s^{\prime }}}=\frac{1}{%
\frac{q_{1}}{s^{\prime }}}+\frac{1}{\frac{q_{2}}{s^{\prime }}}+\cdots +\frac{%
1}{\frac{q_{m}}{s^{\prime }}}$. Since $s^{\prime }<p_{i}<q_{i}<\infty $ for
each $i$, then $\frac{p_{i}}{s^{\prime }}>1$. Lemma \ref{lemma1*} and Lemma %
\ref{lemma42} imply that%
\begin{eqnarray*}
\left \Vert M_{\Omega ,\alpha }^{\left( m\right) }\left( \overrightarrow{f}%
\right) \right \Vert _{L_{u}^{p}\left( {\mathbb{R}^{n}}\right) } &=&\left(
\dint \limits_{{\mathbb{R}^{n}}}\left \vert M_{\Omega ,\alpha }^{\left(
m\right) }\left( \overrightarrow{f}\right) \left( x\right) \right \vert
^{p}u\left( x\right) dx\right) ^{\frac{1}{p}} \\
&\leq &C\left( \dint \limits_{{\mathbb{R}^{n}}}\left \vert M_{\alpha
s^{\prime }}^{\left( m\right) }\left( \left \vert f_{1}\right \vert
^{s^{\prime }},\left \vert f_{2}\right \vert ^{s^{\prime }},\ldots ,\left
\vert f_{m}\right \vert ^{s^{\prime }}\right) \left( x\right) \right \vert ^{%
\frac{p}{s^{\prime }}}u\left( x\right) dx\right) ^{\frac{1}{p}} \\
&=&\left \Vert M_{\alpha s^{\prime }}^{\left( m\right) }\left( \left \vert
f_{1}\right \vert ^{s^{\prime }},\left \vert f_{2}\right \vert ^{s^{\prime
}},\ldots ,\left \vert f_{m}\right \vert ^{s^{\prime }}\right) \right \Vert
_{L_{u}^{\frac{p}{s^{\prime }}}\left( {\mathbb{R}^{n}}\right) }^{\frac{1}{%
s^{\prime }}} \\
&\leq &C\dprod \limits_{i=1}^{m}\left \Vert \left \vert f_{i}\right \vert
^{s^{\prime }}\right \Vert _{L_{\upsilon }^{\frac{p_{i}}{s^{\prime }}}\left( 
{\mathbb{R}^{n}}\right) }^{\frac{1}{s^{\prime }}}\leq C\dprod
\limits_{i=1}^{m}\left \Vert f_{i}\right \Vert _{L_{\upsilon }^{p_{i}}\left( 
{\mathbb{R}^{n}}\right) }.
\end{eqnarray*}%
This proves inequality (\ref{8}).
\end{proof}

\textbf{The proof of Theorem \ref{teo6}. }

\begin{proof}
Under the conditions of Theorem \ref{teo6}, we can choose a positive number $%
\epsilon $ such that%
\begin{equation*}
0<\epsilon <\min \left \{ \alpha ,\frac{mn}{s^{\prime }}-\alpha ,\frac{n}{p}%
,mn\left( \frac{1}{p_{i}}-\frac{1}{mp}\right) ,\frac{1}{pr_{i}^{\prime }}%
\right \} .
\end{equation*}%
Let $\frac{1}{l_{1}}=\frac{1}{p}-\frac{\epsilon }{n}$, $\frac{1}{l_{2}}=%
\frac{1}{p}+\frac{\epsilon }{n}$, then $\frac{p}{2l_{1}}+\frac{p}{2l_{2}}=1$%
. Applying Lemma \ref{lemma2*} and the H\"{o}lder's inequality, we get%
\begin{eqnarray*}
\left \Vert I_{\Omega ,\alpha }^{\left( m\right) }\left( \overrightarrow{f}%
\right) \right \Vert _{L_{u}^{p}\left( {\mathbb{R}^{n}}\right) } &=&\left(
\dint \limits_{{\mathbb{R}^{n}}}\left \vert I_{\Omega ,\alpha }^{\left(
m\right) }\left( \overrightarrow{f}\right) \left( x\right) \right \vert
^{p}u\left( x\right) dx\right) ^{\frac{1}{p}} \\
&\leq &C\left( \dint \limits_{{\mathbb{R}^{n}}}\left[ M_{\Omega ,\alpha
+\epsilon }^{\left( m\right) }\left( \overrightarrow{f}\right) \left(
x\right) \right] ^{\frac{p}{2}}\left[ M_{\Omega ,\alpha -\epsilon }^{\left(
m\right) }\left( \overrightarrow{f}\right) \left( x\right) \right] ^{\frac{p%
}{2}}u\left( x\right) dx\right) ^{\frac{1}{p}} \\
&\leq &C\left( \dint \limits_{{\mathbb{R}^{n}}}\left[ M_{\Omega ,\alpha
+\epsilon }^{\left( m\right) }\left( \overrightarrow{f}\right) \left(
x\right) \right] ^{l_{1}}u\left( x\right) ^{\frac{l_{1}}{p}}dx\right) ^{%
\frac{1}{2l_{1}}}\times \\
&&\left( \dint \limits_{{\mathbb{R}^{n}}}\left[ M_{\Omega ,\alpha -\epsilon
}^{\left( m\right) }\left( \overrightarrow{f}\right) \left( x\right) \right]
^{l_{2}}u\left( x\right) ^{\frac{l_{2}}{p}}dx\right) ^{\frac{1}{2l_{2}}} \\
&\leq &C\left \Vert M_{\Omega ,\alpha +\epsilon }^{\left( m\right) }\left( 
\overrightarrow{f}\right) \right \Vert _{L_{u^{\frac{l_{1}}{p}%
}}^{l_{1}}\left( {\mathbb{R}^{n}}\right) }^{\frac{1}{2}}\left \Vert
M_{\Omega ,\alpha -\epsilon }^{\left( m\right) }\left( \overrightarrow{f}%
\right) \right \Vert _{L_{u^{\frac{l_{2}}{p}}}^{l_{2}}\left( {\mathbb{R}^{n}}%
\right) }^{\frac{1}{2}}.
\end{eqnarray*}%
Suppose that $\frac{1}{g_{i}}=\frac{1}{mp}-\frac{\epsilon }{mn}$, $\frac{1}{%
h_{i}}=\frac{1}{mp}+\frac{\epsilon }{mn}$. Then due to the way we choose $%
\epsilon $, we have $s^{\prime }<p_{i}<g_{i}<\infty $, $s^{\prime
}<p_{i}<h_{i}<\infty $, and moreover%
\begin{equation*}
\frac{1}{l_{1}}=\frac{1}{g_{1}}+\frac{1}{g_{2}}+\cdots +\frac{1}{g_{m}}\text{%
, }\frac{1}{l_{2}}=\frac{1}{h_{1}}+\frac{1}{h_{2}}+\cdots +\frac{1}{h_{m}}.
\end{equation*}%
Observe that $1<\frac{l_{1}}{p}<r_{i}$, we have%
\begin{eqnarray*}
&&\left \vert Q\right \vert ^{\frac{s^{\prime }}{g_{i}}+\frac{\left( \alpha
+\epsilon \right) s^{\prime }}{mn}-\frac{s^{\prime }}{p_{i}}}\left( \frac{1}{%
\left \vert Q\right \vert }\dint \limits_{Q}u\left( x\right) ^{\frac{l_{1}}{p%
}}dx\right) ^{\frac{s^{\prime }}{g_{i}}}\left( \frac{1}{\left \vert Q\right
\vert }\dint \limits_{Q}\upsilon \left( x\right) ^{r_{i}\left( 1-\left( 
\frac{p_{i}}{s^{\prime }}\right) ^{\prime }\right) }dx\right) ^{\frac{1}{%
r_{i}\left( \frac{p_{i}}{s^{\prime }}\right) ^{\prime }}} \\
&\leq &\left \vert Q\right \vert ^{\frac{s^{\prime }}{mp}+\frac{\alpha
s^{\prime }}{mn}-\frac{s^{\prime }}{p_{i}}}\left( \frac{1}{\left \vert
Q\right \vert }\dint \limits_{Q}u\left( x\right) ^{r_{i}}dx\right) ^{\frac{%
s^{\prime }}{r_{i}mp}}\left( \frac{1}{\left \vert Q\right \vert }\dint
\limits_{Q}\upsilon \left( x\right) ^{r_{i}\left( 1-\left( \frac{p_{i}}{%
s^{\prime }}\right) ^{\prime }\right) }dx\right) ^{\frac{1}{r_{i}\left( 
\frac{p_{i}}{s^{\prime }}\right) ^{\prime }}} \\
&\leq &C.
\end{eqnarray*}%
Thus the pair of weights $\left( u^{\frac{l_{1}}{p}},\upsilon \right) $
satisfies condition (\ref{7}) with $s^{\prime }<p_{i}<g_{i}<\infty $ and $%
\alpha +\epsilon $. Then, Theorem \ref{teo5} implies that $M_{\Omega ,\alpha
+\epsilon }^{\left( m\right) }$ is bounded from $L_{\upsilon }^{p_{1}}\left( 
{\mathbb{R}^{n}}\right) \times L_{\upsilon }^{p_{2}}\left( {\mathbb{R}^{n}}%
\right) \times \cdots \times L_{\upsilon }^{p_{m}}\left( {\mathbb{R}^{n}}%
\right) $ to $L_{u^{\frac{l_{1}}{p}}}^{l_{1}}\left( {\mathbb{R}^{n}}\right) $%
. On the other hand, we can see $\frac{l_{2}}{p}<1<r_{i}$ and so%
\begin{eqnarray*}
&&\left \vert Q\right \vert ^{\frac{s^{\prime }}{h_{i}}+\frac{\left( \alpha
-\epsilon \right) s^{\prime }}{mn}-\frac{s^{\prime }}{p_{i}}}\left( \frac{1}{%
\left \vert Q\right \vert }\dint \limits_{Q}u\left( x\right) ^{\frac{l_{2}}{p%
}}dx\right) ^{\frac{s^{\prime }}{h_{i}}}\left( \frac{1}{\left \vert Q\right
\vert }\dint \limits_{Q}\upsilon \left( x\right) ^{r_{i}\left( 1-\left( 
\frac{p_{i}}{s^{\prime }}\right) ^{\prime }\right) }dx\right) ^{\frac{1}{%
r_{i}\left( \frac{p_{i}}{s^{\prime }}\right) ^{\prime }}} \\
&\leq &\left \vert Q\right \vert ^{\frac{s^{\prime }}{mp}+\frac{\alpha
s^{\prime }}{mn}-\frac{s^{\prime }}{p_{i}}}\left( \frac{1}{\left \vert
Q\right \vert }\dint \limits_{Q}u\left( x\right) ^{r_{i}}dx\right) ^{\frac{%
s^{\prime }}{r_{i}mp}}\left( \frac{1}{\left \vert Q\right \vert }\dint
\limits_{Q}\upsilon \left( x\right) ^{r_{i}\left( 1-\left( \frac{p_{i}}{%
s^{\prime }}\right) ^{\prime }\right) }dx\right) ^{\frac{1}{r_{i}\left( 
\frac{p_{i}}{s^{\prime }}\right) ^{\prime }}} \\
&\leq &C.
\end{eqnarray*}%
In this case, the pair of weights $\left( u^{\frac{l_{2}}{p}},\upsilon
\right) $ verifies condition (\ref{7}) with $s^{\prime }<p_{i}<h_{i}<\infty $
and $\alpha -\epsilon $. Then, Theorem \ref{teo5} implies that $M_{\Omega
,\alpha -\epsilon }^{\left( m\right) }$ is a bounded operator from $%
L_{\upsilon }^{p_{1}}\left( {\mathbb{R}^{n}}\right) \times L_{\upsilon
}^{p_{2}}\left( {\mathbb{R}^{n}}\right) \times \cdots \times L_{\upsilon
}^{p_{m}}\left( {\mathbb{R}^{n}}\right) $ to $L_{u^{\frac{l_{2}}{p}%
}}^{l_{1}}\left( {\mathbb{R}^{n}}\right) $.Combining above estimates
together, we get%
\begin{eqnarray*}
\left \Vert I_{\Omega ,\alpha }^{\left( m\right) }\left( \overrightarrow{f}%
\right) \right \Vert _{L_{u}^{p}\left( {\mathbb{R}^{n}}\right) } &\leq
&C\left \Vert M_{\Omega ,\alpha +\epsilon }^{\left( m\right) }\left( 
\overrightarrow{f}\right) \right \Vert _{L_{u^{\frac{l_{1}}{p}%
}}^{l_{1}}\left( {\mathbb{R}^{n}}\right) }^{\frac{1}{2}}\left \Vert
M_{\Omega ,\alpha -\epsilon }^{\left( m\right) }\left( \overrightarrow{f}%
\right) \right \Vert _{L_{u^{\frac{l_{2}}{p}}}^{l_{2}}\left( {\mathbb{R}^{n}}%
\right) }^{\frac{1}{2}} \\
&\leq &\dprod \limits_{i=1}^{m}\left \Vert f_{i}\right \Vert _{L_{\upsilon
}^{p_{i}}\left( {\mathbb{R}^{n}}\right) }.
\end{eqnarray*}%
This is desired inequality (\ref{9}) of Theorem \ref{teo6}. The proof of
Theorem \ref{teo6} is completed.
\end{proof}

\section{Product two-weighted weak-type estimates for $M_{\Omega ,\protect%
\alpha }^{\left( m\right) }$}

For the proof of Theorem \ref{teo53}, we need following lemma.

\begin{lemma}
\label{lemma52}\cite{Shi} Let $0<\alpha <mn$, $1\leq p_{i}\leq q_{i}<\infty $
for each $i=1,2,\ldots ,m$. Let $\frac{1}{p}=\frac{1}{q_{1}}+\frac{1}{q_{2}}%
+\cdots +\frac{1}{q_{m}}$ and $\left( u,\upsilon \right) $ be a pair of
weights. If $\left( u,\upsilon \right) \in \dbigcap
\limits_{i=1}^{m}A_{p_{i},q_{i}}^{\frac{\alpha }{m}}$, then for every $%
f_{i}\in L_{\upsilon }^{p_{i}}\left( {\mathbb{R}^{n}}\right) $, there is a
constant $C$, independent of $f_{i}$, such that%
\begin{equation*}
\left \Vert M_{\alpha }^{\left( m\right) }\left( \overrightarrow{f}\right)
\right \Vert _{L_{u}^{p,\infty }\left( {\mathbb{R}^{n}}\right) }\leq C\dprod
\limits_{i=1}^{m}\left \Vert f_{i}\right \Vert _{L_{\upsilon }^{p_{i}}\left( 
{\mathbb{R}^{n}}\right) }.
\end{equation*}
\end{lemma}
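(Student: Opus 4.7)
The plan is to reduce the rough weak-type estimate to the smooth weak-type estimate already available in Lemma \ref{lemma52}, using the pointwise domination from Lemma \ref{lemma1*} together with a simple rescaling of the Lebesgue and weak-Lebesgue norms by the exponent $s'$.

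First, I invoke Lemma \ref{lemma1*}, which gives the pointwise control
\begin{equation*}
M_{\Omega ,\alpha }^{(m)}(\overrightarrow{f})(x)\leq C\bigl[M_{\alpha s^{\prime }}^{(m)}(|f_{1}|^{s^{\prime }},\ldots ,|f_{m}|^{s^{\prime }})(x)\bigr]^{1/s^{\prime }}.
\end{equation*}
Consequently, for any $\lambda >0$,
\begin{equation*}
\{x\in {\mathbb{R}^{n}}:M_{\Omega ,\alpha }^{(m)}(\overrightarrow{f})(x)>\lambda \}\subset \{x\in {\mathbb{R}^{n}}:M_{\alpha s^{\prime }}^{(m)}(|f_{1}|^{s^{\prime }},\ldots ,|f_{m}|^{s^{\prime }})(x)>(\lambda /C)^{s^{\prime }}\}.
\end{equation*}

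Next I set up the smooth estimate with rescaled parameters. Put $P_{i}=p_{i}/s^{\prime }$, $Q_{i}=q_{i}/s^{\prime }$, $A=\alpha s^{\prime }$, and $P=p/s^{\prime }$. Since $s^{\prime }\leq p_{i}\leq q_{i}$ we have $1\leq P_{i}\leq Q_{i}<\infty $, and the identity $\frac{1}{P}=\frac{1}{Q_{1}}+\cdots +\frac{1}{Q_{m}}$ follows by multiplying the relation $\frac{1}{p}=\sum \frac{1}{q_{i}}$ by $s^{\prime }$. Moreover the hypothesis $(u,\upsilon )\in \bigcap_{i=1}^{m}A_{p_{i}/s^{\prime },\,q_{i}/s^{\prime }}^{\alpha s^{\prime }/m}$ is exactly the statement $(u,\upsilon )\in \bigcap_{i=1}^{m}A_{P_{i},Q_{i}}^{A/m}$, which is the hypothesis required to apply Lemma \ref{lemma52} to $M_{A}^{(m)}=M_{\alpha s^{\prime }}^{(m)}$ on the tuple $(|f_{1}|^{s^{\prime }},\ldots ,|f_{m}|^{s^{\prime }})$, since $|f_{i}|^{s^{\prime }}\in L_{\upsilon }^{P_{i}}({\mathbb{R}^{n}})$. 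Lemma \ref{lemma52} then yields
\begin{equation*}
\left\Vert M_{\alpha s^{\prime }}^{(m)}(|f_{1}|^{s^{\prime }},\ldots ,|f_{m}|^{s^{\prime }})\right\Vert _{L_{u}^{p/s^{\prime },\infty }({\mathbb{R}^{n}})}\leq C\prod_{i=1}^{m}\left\Vert |f_{i}|^{s^{\prime }}\right\Vert _{L_{\upsilon }^{p_{i}/s^{\prime }}({\mathbb{R}^{n}})}=C\prod_{i=1}^{m}\left\Vert f_{i}\right\Vert _{L_{\upsilon }^{p_{i}}({\mathbb{R}^{n}})}^{s^{\prime }}.
\end{equation*}

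Finally, I combine the two ingredients by exploiting the elementary identity $\|g^{1/s^{\prime }}\|_{L_{u}^{p,\infty }}=\|g\|_{L_{u}^{p/s^{\prime },\infty }}^{1/s^{\prime }}$, valid for any nonnegative $g$ (obtained from the distributional definition via the substitution $\mu =\lambda ^{s^{\prime }}$). Applying this with $g=M_{\alpha s^{\prime }}^{(m)}(|f_{1}|^{s^{\prime }},\ldots ,|f_{m}|^{s^{\prime }})$, together with the pointwise bound from Lemma \ref{lemma1*}, gives
\begin{equation*}
\left\Vert M_{\Omega ,\alpha }^{(m)}(\overrightarrow{f})\right\Vert _{L_{u}^{p,\infty }({\mathbb{R}^{n}})}\leq C\left\Vert M_{\alpha s^{\prime }}^{(m)}(|f_{1}|^{s^{\prime }},\ldots ,|f_{m}|^{s^{\prime }})\right\Vert _{L_{u}^{p/s^{\prime },\infty }({\mathbb{R}^{n}})}^{1/s^{\prime }}\leq C\prod_{i=1}^{m}\left\Vert f_{i}\right\Vert _{L_{\upsilon }^{p_{i}}({\mathbb{R}^{n}})},
\end{equation*}
which is the desired inequality \eqref{52}. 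The only mildly delicate point is the rescaling of the weak-type quasinorm under taking an $s^{\prime }$-th root, but this is a one-line distribution-function computation; all other steps are direct bookkeeping of exponents and an application of Lemma \ref{lemma52}.
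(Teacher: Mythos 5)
Note first that what you have written is a proof of Theorem \ref{teo53}, not of Lemma \ref{lemma52} itself: that lemma is the weak-type bound for the smooth operator $M_{\alpha}^{\left( m\right)}$, which the paper imports from \cite{Shi} without proof, and you correctly treat it as a black box. As an argument for Theorem \ref{teo53} your proposal is correct and is essentially identical to the paper's own proof: pointwise domination of $M_{\Omega ,\alpha }^{\left( m\right) }$ by $\bigl[ M_{\alpha s^{\prime }}^{\left( m\right) }\left( \left\vert f_{1}\right\vert ^{s^{\prime }},\ldots ,\left\vert f_{m}\right\vert ^{s^{\prime }}\right) \bigr]^{1/s^{\prime }}$ via Lemma \ref{lemma1*}, the rescaling identity $\Vert g^{1/s^{\prime }}\Vert _{L_{u}^{p,\infty }}=\Vert g\Vert _{L_{u}^{p/s^{\prime },\infty }}^{1/s^{\prime }}$ (which the paper carries out by the substitution $\theta =\lambda ^{s^{\prime }}$ inside the supremum), and then Lemma \ref{lemma52} applied with exponents $p_{i}/s^{\prime }$, $q_{i}/s^{\prime }$ and order $\alpha s^{\prime }$. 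Your write-up is, if anything, more careful than the paper's in verifying that the rescaled parameters and the weight hypothesis match those required by Lemma \ref{lemma52}.
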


\textbf{The proof of Theorem \ref{teo53}.}

\begin{proof}
Since $s^{\prime }\leq p_{i}\leq q_{i}<\infty $ for each $i$ and $\left(
u,\upsilon \right) \in \dbigcap \limits_{i=1}^{m}A_{\frac{p_{i}}{s^{\prime }}%
,\frac{q_{i}}{s^{\prime }}}^{\frac{\alpha s^{\prime }}{m}}$, we can see by
Lemma \ref{lemma1*} and Lemma \ref{lemma52} that%
\begin{eqnarray*}
\left \Vert M_{\Omega ,\alpha }^{\left( m\right) }\left( \overrightarrow{f}%
\right) \right \Vert _{L_{u}^{p,\infty }\left( {\mathbb{R}^{n}}\right) }
&=&\sup_{\lambda >0}\lambda \left( u\left( \left \{ x:\left \vert M_{\Omega
,\alpha }^{\left( m\right) }\left( \overrightarrow{f}\right) \right \vert
>\lambda \right \} \right) \right) ^{\frac{1}{p}} \\
&\leq &C\sup_{\lambda >0}\lambda \left( u\left( \left \{ x:\left \vert \left[
M_{\alpha s^{\prime }}^{\left( m\right) }\left( \left \vert f_{1}\right
\vert ^{s^{\prime }},\left \vert f_{2}\right \vert ^{s^{\prime }},\ldots
,\left \vert f_{m}\right \vert ^{s^{\prime }}\right) \left( x\right) \right]
\right \vert >\lambda ^{s^{\prime }}\right \} \right) \right) ^{\frac{1}{p}}
\\
&\leq &C\sup_{\theta >0}\theta ^{\frac{1}{s^{\prime }}}\left( u\left( \left
\{ x:\left \vert \left[ M_{\alpha s^{\prime }}^{\left( m\right) }\left(
\left \vert f_{1}\right \vert ^{s^{\prime }},\left \vert f_{2}\right \vert
^{s^{\prime }},\ldots ,\left \vert f_{m}\right \vert ^{s^{\prime }}\right)
\left( x\right) \right] \right \vert >\theta \right \} \right) \right) ^{%
\frac{1}{p}} \\
&\leq &C\left \Vert M_{\alpha s^{\prime }}^{\left( m\right) }\left( \left
\vert f_{1}\right \vert ^{s^{\prime }},\left \vert f_{2}\right \vert
^{s^{\prime }},\ldots ,\left \vert f_{m}\right \vert ^{s^{\prime }}\right)
\right \Vert _{L_{u}^{\frac{p}{s^{\prime }},\infty }\left( {\mathbb{R}^{n}}%
\right) }^{\frac{1}{s^{\prime }}} \\
&\leq &C\dprod \limits_{i=1}^{m}\left \Vert \left \vert f_{i}\right \vert
^{s^{\prime }}\right \Vert _{L_{\upsilon }^{\frac{p_{i}}{s^{\prime }}}\left( 
{\mathbb{R}^{n}}\right) }^{\frac{1}{s^{\prime }}}\leq C\dprod
\limits_{i=1}^{m}\left \Vert f_{i}\right \Vert _{L_{\upsilon }^{p_{i}}\left( 
{\mathbb{R}^{n}}\right) }.
\end{eqnarray*}%
This is desired inequality (\ref{52}). We complete the proof of Theorem \ref%
{teo53}.
\end{proof}

\end{document}